\newcommand{\Wip}{\mathrm{A}_+^1}
\newcommand{\vanish}[1]{\relax}
\newcommand{\R}{\mathbb{R}}
\newcommand{\C}{\mathbb{C}}
\newcommand{\ud}{\mathrm{d}}
\newcommand{\eM}{\mathrm{M}}
\newcommand{\Lap}{\mathcal{L}}
\newcommand{\Sum}[2][\relax]{%
 \ifx#1\relax \sideset{}{_{#2}}\sum
 \else \sideset{}{^{#1}_{#2}}\sum
 \fi}
\DeclareMathOperator{\re}{Re}
\newcommand{\abs}[1]{\left| #1 \right|}
\renewcommand{\abs}[1]{\left\vert#1\right\vert}
\newcommand{\Ce}{\mathrm{C}}
\newcommand{\pfeil}{\longrightarrow}
\newcommand{\tpfeil}{\longmapsto}
\DeclareMathOperator{\dom}{dom}
\DeclareMathOperator{\ran}{ran}
\newcommand{\cls}[1]{\overline{#1}}
\DeclareMathOperator{\Lin}{\mathcal{L}}
\newcommand{\norm}[2][\relax]{%
   \ifx#1\relax \ensuremath{\left\Vert#2\right\Vert}
   \else \ensuremath{\left\Vert#2\right\Vert_{#1}}
   \fi}
\newcommand{\sprod}[2]{\ensuremath{%
  \setbox0=\hbox{\ensuremath{#2}}
  \dimen@\ht0
  \advance\dimen@ by \dp0
  \left(\left.#1\rule[-\dp0]{0pt}{\dimen@}\,\right|#2\hspace{1pt}\right)}}
\newcounter{aufzi}
\newcounter{aufzii}
\newcounter{aufziii}
\numberwithin{equation}{section}
\newtheorem{theorem}{Theorem}[section]
\newtheorem{proposition}[theorem]{Proposition}
\newtheorem{corollary}[theorem]{Corollary}
\newtheorem{lemma}[theorem]{Lemma}
\theoremstyle{definition}
\newtheorem{example}[theorem]{Example}
\theoremstyle{remark}
\newtheorem{remark}[theorem]{Remark}
\numberwithin{equation}{section}
\numberwithin{theorem}{section}
\newcommand{\rmo}{\mathrm{o}}
\newcommand{\rmO}{\mathrm{O}}
\begin{document}

\title[What does a rate in a mean
ergodic theorem imply ?]{What does a rate in a mean ergodic theorem
imply?}

\author{Alexander Gomilko}
\address{Faculty of Mathematics and Computer Science\\
Nicholas Copernicus University\\
ul. Chopina 12/18\\
87-100 Toru\'n, Poland
}

\email{gomilko@mat.umk.pl}

\author{Yuri Tomilov}
\address{Faculty of Mathematics and Computer Science\\
Nicholas Copernicus University\\
ul. Chopina 12/18\\
87-100 Toru\'n, Poland\\
and
Institute of Mathematics\\
Polish Academy of Sciences\\
\'Sniadeckich 8\\
00-956 Warsaw, Poland
}

\email{tomilov@mat.umk.pl}

\thanks{  The authors were  partially
supported by the NCN grant DEC-2011/03/B/ST1/00407.}

\subjclass[2010]{Primary 47A60, 47A35; Secondary 47D03}

\keywords{mean ergodic theorem, rate of convergence, functional
calculus, $C_0$-semigroup}

\date{\today}

\begin{abstract}
We develop a general framework for the inverse mean ergodic
theorems with rates for operator semigroups thus completing a
construction of the theory initiated in \cite{GHT1} and
\cite{GHT}.
\end{abstract}

\maketitle

\section{Introduction}
In this paper we are concerned with  the rates of convergence of
Ces\'aro means
\begin{equation}\label{mean} \Ce_t(A):=\frac{1}{t} \int_0^t T(s)\, d{s}, \quad t >0,
\end{equation}
for a bounded $C_0$-semigroup $(T(t))_{t \ge 0}$ with
generator $-A$  on a
(complex) Banach space $X.$
Recall that
 in general
\begin{align}\label{cc}
 \{x\in X: \Ce_t(A)x \, \, \text{strongly converges}\}=\ker (A)\oplus\cls{\ran} (A).
 \end{align}
Moreover, $ \Ce_t(A)x$  converges to zero  if and only if
$x \in \cls{\ran} (A)$.

A mean ergodic theorem provides conditions under which the means in \eqref{mean} converge strongly on the whole of $X$.
One of the most well-known mean ergodic theorems says that if $X$ is reflexive then
\begin{equation*}
X=\ker (A)\oplus\cls{\ran} (A),
\end{equation*}
 hence $\Ce_t(A), t >0, $  are strongly convergent.
Mean ergodic theorems is a classical chapter of the ergodic theory and for its basic results one may consult \cite{Kr85}.

If a mean ergodic theorem holds then it is natural to try to equip it with a certain convergence rate.
After a simple normalization, one can assume without loss of
generality, that $\Ce_t(A), t >0,$ converge to zero as $t \to \infty.$ Thus we will study the  decay rates of $\| \Ce_t(A)x\|, x \in X.$
 (See the introduction in  \cite{GHT} for a more detailed discussion.)

The rates in ergodic theorems were studied in many settings and backgrounds. For some
of the achievements in this area one may consult the survey papers \cite{Ka96}, \cite{BuGe95} (and references therein)
and also  \cite{AsLi07}, \cite{CoLi03}--\cite{Der06}, \cite{KaRe10}, \cite{Shaw} and \cite{West1998}. However no systematic approach to {\it characterizing}  rates in  {\it mean}
ergodic theorems was proposed until very recent time. The present paper provides one more step towards such a characterization.
 It is a companion to our previous articles \cite{GHT} and
\cite{GHT1} where the theory of rates in mean ergodic theorems was
developed by methods of functional calculus. It was our
initial idea that a functional calculus approach  might produce certain
rates of decay of Ces\'aro means in a canonical way and thus
would allow us to quantify their convergence properties.
This idea appeared to be fruitful and opened a door to many
tools from outside  of ergodic theory. Taking advantage of these
new tools we are now able to introduce and study in details
abstract inverse theorems on decay rates, the main subject of this
paper.

To set the scene, let us first recall certain direct theorems on rates
obtained in \cite{GHT1} and \cite{GHT}. The direct problem in the
study of rates for Ces\' aro means can be formulated as follows.

{\it Direct Problem:} Given $x$ from the range (or the domain) of
a function of $A$  find a rate of decay (if any) for $\Ce_t(A)x$
and prove its optimality. (Of course, we should specify what we
mean by `function' and `optimality' and that will be clear from
further considerations.)

Theorems answering the direct problem will be called {\it direct
mean ergodic theorems with rates.} Motivated by probabilistic applications, the problem of obtaining various
direct theorems with rates in has attracted considerable attention last years.
We note the foundational paper  \cite{DerLin01} and then the subsequent papers \cite{AsLi07}, \cite{CoLi03}--\cite{Der06}, \cite{KaRe10}.

Recently, we proposed in \cite{GHT1} and \cite{GHT} an abstract
framework which allowed us to encompass many partial results and
to solve certain open problems on the rates of decay of Ces\' aro means.
In particular, we proved in \cite[Theorem 3.4 and Proposition
4.2]{GHT}  that if $(T(t))_{t \ge 0}$ is a bounded $C_0$-semigroup on $X$ and $f$ is a  Bernstein function, $\lim_{t \to
0+}f(t)=0,$ then
\begin{equation}
x \in \ran (f(A))\Longrightarrow \norm{\Ce_t(A)x} =\mbox{O}(f(t^{-1})),\qquad t\to\infty.
\label{ostill}
\end{equation}
As corollaries, we obtained  rates of decay of Ces\'aro means
on the ranges of polynomial and logarithmic functions thus extending and
sharpening known results. Our results were proved to be optimal in
a natural sense.

To understand the limitations of direct mean ergodic theorems with
rates it is natural to ask whether the implication
 \eqref{ostill}  can be reversed. Examples
show that
one cannot in general expect the implication opposite to \eqref{ostill}  to be true (see e.g. Section \ref{optimality}
of the present paper  and   \cite[Example, p. 121]{DerLin01} concerning the discrete setting). Thus we are interested in the best possible conditions
on the decay of the means implying the converses of  \eqref{ostill}, and our abstract inverse problem reads as follows.

{\it Inverse Problem:} Given the rate of decay of $\Ce_t(A)x$ for an element $x \in X$ prove that $x$  is in
an appropriate range (or domain) of a function of $A$  and show
optimality of the result.

Statements of that form will be called {\it inverse mean ergodic theorems with
rates}. The first inverse theorems were proved in the discrete setting  by  Browder \cite{Bro58} and
Butzer and Westphal \cite{BuWe71}. They showed (indirectly in the first case) that if $X$ is reflexive and $T$ is a power bounded operator on $X$, then
$\norm{1/n \sum_{k=0}^{n-1} T^k x}=\rmO({1}/{n})$ implies that $x \in  \ran (I-T).$
It was also noted in \cite{BuWe71} that one cannot produce better
rates than $1/n,$ since $\norm{1/n \sum_{k=0}^{n-1} T^k x}=\rmo({1}/{n})$ implies $x=0.$ Thus one has to deal with rates between $1/n$
and $\rmo(1),$ and the same is true for the continuous time means  $\Ce_t(A)x$ when the rate $1/n$ is replaced by $1/t$ - see
\cite{GHT} for a discussion.
This complicates the study of rates since many plausible conditions involving rates appear to be too strong in view of
the extremal $1/n$ (or $1/t$) property. See e.g. our Appendix.

Various partial situations (mostly of polynomial rates and mostly in the discrete framework) were
considered in \cite{Cu10}, \cite{CuLi09}, \cite{CoCuLi11}, \cite{DerLin01}, and \cite{KaRe10}. The main goal of the present paper is to provide
an abstract set-up for the inverse theorems and to give them a
systematic treatment. This set-up appears to be coherent with
direct theorems obtained in  \cite{GHT} and it constitutes
in a sense a final block of the theory developed in
\cite{GHT}. As in the case of direct theorems treated in
\cite{GHT}, known inverse theorems on rates for
particular cases (e.g. for polynomial rates) can be included in
our framework.

In \cite{GHT} our direct ergodic theorems involved the ranges of complete Bernstein functions of semigroup generators
 (as e.g. in \eqref{ostill}).
In the present study of the inverse theorems, it will  be convenient to
restrict our attention to the class of Stieltjes functions and to
deal with their domains rather than the ranges of (reciprocal) operator complete Bernstein functions.
Such a setting enabled us to apply
an (adapted) abstract characterization of the domains of operator complete
Bernstein functions due to  Hirsch \cite{HirschFA}. (A similar result
in a slightly more general setting was obtained later by R.
Schilling, see e.g. Theorem $12.19,$  Remark $12.20,$ and Corollary
12.21 in \cite{SchilSonVon2010} and also \cite{Schil98}.)

The paper is based on ideas worked out in \cite{GHT1}, \cite{GHT},
and \cite{HaTo10}. However, its finer details are essentially
different from the arguments used in those papers and it
complements the results obtained in \cite{GHT1}, \cite{GHT}, and
\cite{HaTo10}. To give a flavor of inverse theorems proved by our
technique we indicate a partial converse of the direct theorem
formulated above. It illustrates our approach of adding an `extra
rate' to the decay of the means in order to invert the direct
statements.

Assume that $\cls{\ran} (A)=X.$ If $g$ is a Stieltjes function of
the form
\begin{equation*}
g(z)=\int_{0+}^\infty \frac{\mu(ds)}{z+s}, \qquad z>0,
\end{equation*}
where $\mu$ is a (non-negative) Radon measure on $(0,\infty)$ such
that
\begin{equation*}
 \int_{0+}^\infty \frac{\mu(ds)}{1+s}\,<\,\infty,
\end{equation*}
$g(0+)=\infty,$ and  $x\in X$ satisfies
\begin{equation*} \int_1^\infty
\frac{g(1/t)\|\Ce_t(A)x\|}{t} dt<\infty,
\end{equation*}
then $x\in \dom(g(A))$, or, equivalently, $x \in \ran ([1/g] (A)).$

Note that there are close relations between inverse mean ergodic
theorems for bounded $C_0$-semigroups $(T(t))_{t \ge0}$ and
bounded discrete semigroup $(T^n)_{n \ge 0}$, and our approach, in
fact, unifies continuous and discrete frameworks. It allows one to
study the continuous and the discrete cases simultaneously and to
obtain results parallel in spirit and proofs. However, because of
space limitations, the functional calculus approach to inverse
mean ergodic theorems in the discrete case will be presented
elsewhere.

 We also show
that our statements are sharp and cannot
in general be improved. In fact, it appears that they are
are optimal even for a very simple multiplication operator on an
$L_1$ space. However, even in this simple case, there are
nontrivial technical difficulties to overcome. Thus a substantial
part of the paper  is devoted to proving optimality of our results
in various senses.

Our Appendix  addresses important and  related to inverse
theorems problem which however stay a bit aside from the
mainstream of the exposition and thus shifted to a separate part.
We believe that it is of independent interest.
There we prove that the means cannot be too small in an ``integral'' sense.

\subsection{Some Notations and Definitions}\label{notations}
For a closed linear operator $A$ on a complex Banach space $X$ we
denote by $\dom(A),$ $\ran(A)$, $\ker(A)$, and $\rho(A)$  the
{\em domain}, the {\em range},  the {\em kernel}, and the {\em resolvent set} of $A$, respectively. The norm-closure of the range is
written as $\cls{\ran}(A)$. The space of bounded linear operators
on $X$ is denoted by $\Lin(X)$.
Finally, we set $\mathbb R_+:=[0,\infty)$ and $\mathbb C_+:=\{\lambda \in \mathbb C: {\rm Re \, \lambda} > 0 \}.$

\section{Preliminaries}\label{prem}
\subsection{Functional calculus: Bernstein and Stieltjes functions}
In this subsection we recall basic properties of operator
Bernstein and Stieltjes functions and prove several auxiliary
statements on functional calculi useful for the sequel. Moreover
we arrange the material in the way most suitable for our purposes.
The developed machinery will be used intensively in the next sections.

Let $\eM(\R_+)$ be a Banach algebra of bounded Radon measures on $\R_+.$
Define the {\em Laplace transform} of  $\mu \in \eM(\R_+)$  as
\[ (\Lap\mu)(z) := \int_{\R_+} e^{-sz} \, \mu(\ud{s}),
\qquad  z \in \mathbb C_+,
\]
and note that $\Lap\mu$ extends to a  continuous function on $\cls{\C}_+$.
Note that the space
\[ \Wip(\C_+) := \{ \Lap\mu : \mu \in \eM(\R_+)\}
\]
is a commutative Banach algebra with pointwise multiplication and with respect to the
norm
\begin{equation}\label{mmm}
\norm{\Lap \mu}_{\Wip} := \norm{\mu}_{\eM(\R_+)} = \abs{\mu}(\R_+),
\end{equation}
and the Laplace transform
\[ \Lap : \eM(\R_+) \pfeil \Wip(\C_+)
\]
is an isometric isomorphism.

Let $-A$ be the generator of a bounded $C_0$-semigroup
$(T(s))_{s\ge 0}$ on a Banach space $X$. Then the mapping
\[ g = \Lap {\mu} = \int_{\R_+} e^{-s\cdot}\, \mu(\ud{s}) \quad
\mapsto \quad g(A) := \int_{\R_+} T(s)\, \mu(\ud{s})
\]
(where the integral converges in the strong topology)
is a continuous algebra homomorphism of $\Wip(\C_+)$ into $\Lin(X).$
The homomorphism is called the {\em Hille-Phillips (HP-) functional calculus}
for $A$. Its basic properties can be found in  \cite[Chapter XV]{HilPhi}.

The HP-calculus has an extension to a larger function
class. This extension is
constructed  as
follows: if $f: \C_+ \to \C$ is holomorphic such that there exists
a function $e\in \Wip(\C_+)$ with $ef \in \Wip(\C_+)$ and the
operator $e(A)$ is injective, then we define
\[ f(A) := e(A)^{-1} \, (ef)(A)
\]
with its natural domain $\dom (f(A)):=\{x \in X :
(ef)(A)x \in \ran(e(A)) \}$. In this case $f$ is called
{\em regularizable}, and $e$ is called a {\em regularizer} for $f$. Such a
 definition of $f(A)$ does not depend on the
particular regularizer $e$ and $f(A)$ is a closed
 operator on $X$. Moreover, the set of all
regularizable functions $f$ is an algebra depending on $A$
(see e.g. \cite[p. 4-5]{Haa2006} and \cite[p. 246-249]{deLau95}),
and the mapping
\[ f \tpfeil f(A)
\]
from this algebra into the set of all closed operators on $X$ is
called the {\em extended Hille--Phillips calculus} for $A$.
The  next {\em
product rule} of this calculus (see e.g. \cite[Chapter 1]{Haa2006}) will be crucial for the sequel: {\em if $f$ is regularizable and $g\in \Wip(\C_+)$,
then}
\begin{equation}\label{hpfc.e.prod}
 g(A) f(A) \subseteq f(A) g(A) = (fg)(A),
\end{equation}
where we take the natural domain for a product
of operators.

This regularization approach can be  applied to the study of
operator Bernstein functions. First recall that  $f\in \Ce^\infty(0, \infty)$ is called a complete monotone function  if
\[
f(t)\geq 0\quad \mbox{and}\quad (-1)^n \frac{d^n
f(t)}{d{t}^n}\ge 0 \qquad \text{for all $n \in \mathbb N$ and $
t > 0$}.
\]
A function $f\in \Ce^\infty(0, \infty)$ is called {\em Bernstein
function} if its derivative is completely monotone.
By \cite[Theorem 3.2]{SchilSonVon2010}, $f$ is
Bernstein  if and only if there exist  constants $a,
b\geq 0$ and a positive Radon measure $\mu$ on $(0,\infty)$
satisfying
\begin{equation*}
\int_{0+}^\infty\frac{s}{1+s}\,\mu(d{s})<\infty \label{mu}
\end{equation*}
and such that
\begin{equation}\label{hpfc.e.bf}
f(z)=a+bz+\int_{0+}^\infty (1-e^{-sz})\mu(d{s}), \qquad z>0.
\end{equation}
The formula \eqref{hpfc.e.bf} is called the Levy-Khintchine
representation of $f.$ The triple $(a, b, \mu)$ is uniquely
determined by the corresponding Bernstein function $f$ and is
called the Levi-Khintchine triple.

It was proved in \cite[Lemma 2.5]{GHT} that  Bernstein functions belong to
the extended HP-functional calculus and every Bernstein function is
regularizable by any of the functions $e_{\lambda}(z)=(\lambda
+z)^{-1}$, $\re \lambda
> 0.$
\vanish{Namely
the
following statement holds.
\begin{lemma}\label{hpfc.l.bf-fc}
Every Bernstein function $f$ can be written in the form
\[ f(z) = f_1(z) + z\, f_2(z), \qquad z >0,
\]
where $f_1, f_2 \in \Wip(\C_+)$.
\end{lemma}
Lemma \ref{hpfc.l.bf-fc} implies that
every Bernstein function is
regularizable by any of the functions $e_{\lambda}(z)=(\lambda
+z)^{-1}$, $\re \lambda
> 0.$
} This led to the following the operator Levy-Khintchine
representation for a Bernstein function $f$ of $A$ (cf.
\eqref{hpfc.e.bf}) essentially due to Phillips \cite{Phil52}.
\begin{theorem}\label{hpfc.c.bf-fc}
Let $-A$ generate a bounded $C_0$-semigroup $(T(s))_{s\ge 0}$ on $X$, and let $f \sim (a,b,\mu)$ be a Bernstein
function. Then $f(A)$ is defined in the extended HP-calculus. Moreover, $\dom(A) \subseteq \dom(f(A))$ and
\begin{equation}\label{phillips}
f(A)x = ax + bAx + \int_{0+}^\infty (I - T(s))x \, \mu(d{s})
\end{equation}
for each $x\in \dom(A),$ and $\dom(A)$ is a core for $f(A)$. If
 $a
> 0,$ then $\ran(f(A)) =X$ and $f(A)$ is invertible.
\end{theorem}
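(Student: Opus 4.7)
My plan is to exploit the decomposition $f=f_1+z\,f_2$ with $f_1,f_2\in\Wip(\C_+)$ (which underlies the regularizability statement cited from \cite[Lemma~2.5]{GHT}) and then read off the Phillips formula from the product rule \eqref{hpfc.e.prod}. Concretely, I would split the L\'evy measure as $\mu=\mu|_{(0,1]}+\mu|_{(1,\infty)}$: the tail piece $\mu|_{(1,\infty)}$ is a finite measure (since $\int s(1+s)^{-1}\mu(d{s})<\infty$), so the function $\int_{(1,\infty)}(1-e^{-sz})\,\mu(d{s})$ lies in $\Wip(\C_+)$ directly. For the head piece, the identity $1-e^{-sz}=z\int_0^s e^{-rz}\,dr$ and Fubini give
\[
\int_{(0,1]}(1-e^{-sz})\,\mu(d{s})=z\int_0^1 e^{-rz}\,\mu((r,1])\,dr=:z\,h(z),
\]
where $h\in\Wip(\C_+)$ since $\int_0^1\mu((r,1])\,dr=\int_{(0,1]}s\,\mu(d{s})<\infty$. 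Setting $f_1:=a+\int_{(1,\infty)}(1-e^{-s\cdot})\,\mu(d{s})$ and $f_2:=b+h$ gives the required representation in $\Wip(\C_+)$, and \cite[Lemma~2.5]{GHT} then yields that $f(A)$ is defined in the extended HP-calculus.

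The central step is to show that $f(A)x=f_1(A)x+f_2(A)Ax$ for every $x\in\dom(A)$. Regularizing by $e_\lambda(z):=(\lambda+z)^{-1}$ and using $z\,e_\lambda(z)=1-\lambda\,e_\lambda(z)\in\Wip(\C_+)$, the product rule \eqref{hpfc.e.prod} shows $(z\,f_2)(A)\supseteq A\,f_2(A)$. The small point to verify is that $f_2(A)$ leaves $\dom(A)$ invariant and commutes with $A$ there: writing $f_2(A)x=\int_{\R_+}T(s)x\,\nu(d{s})$ for the representing measure $\nu$ of $f_2$, this follows from $T(s)\dom(A)\subseteq\dom(A)$ with $AT(s)x=T(s)Ax$, the uniform bound $\|T(s)Ax\|\le\sup_r\|T(r)\|\,\|Ax\|$, and closedness of $A$. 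Having $f(A)x=f_1(A)x+f_2(A)Ax$, I would identify the pieces: $f_1(A)x=ax+\int_{(1,\infty)}(I-T(s))x\,\mu(d{s})$ is immediate from Laplace inversion, while for the $f_2$-part the fundamental theorem $(I-T(s))x=\int_0^s T(r)Ax\,dr$ valid on $\dom(A)$, combined with Fubini, converts $h(A)Ax$ into $\int_{(0,1]}(I-T(s))x\,\mu(d{s})$. Together with $bAx$, this produces the claimed L\'evy--Khintchine formula.

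The core property is obtained by the standard resolvent-smoothing argument: for $y\in\dom(f(A))$, set $y_n:=n(n+A)^{-1}y\in\dom(A)$. Since $n(n+A)^{-1}=n\int_0^\infty e^{-ns}T(s)\,ds$ is uniformly bounded by $\sup_s\|T(s)\|$ and converges strongly to $I$, we have $y_n\to y$. Because $n/(n+z)\in\Wip(\C_+)$, the product rule \eqref{hpfc.e.prod} yields $f(A)y_n=n(n+A)^{-1}f(A)y\to f(A)y$, confirming that $\dom(A)$ is a core.

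Finally, the invertibility claim when $a>0$ rests on showing $1/f\in\Wip(\C_+)$; this is the step where an external input is needed. Since $f\ge a>0$ on $(0,\infty)$ and $x\mapsto 1/x$ is completely monotone on $(0,\infty)$, the composition rule \cite[Theorem~3.7]{SchilSonVon2010} makes $1/f$ completely monotone on $(0,\infty)$, and its finite limit $1/f(0+)=1/a$ then forces, by Bernstein's theorem, the representation $1/f=\Lap\tau$ with $\tau$ a finite positive Radon measure of total mass $1/a$. Thus $1/f\in\Wip(\C_+)$, and applying the product rule to the identity $f\cdot(1/f)=1$ gives $(1/f)(A)f(A)\subseteq f(A)(1/f)(A)=I$, so $f(A)$ is a bijection of $\dom(f(A))$ onto $X$ with bounded inverse $(1/f)(A)\in\Lin(X)$. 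The most delicate point throughout is careful bookkeeping of natural domains when invoking \eqref{hpfc.e.prod}; everything else reduces to Laplace-transform identities.
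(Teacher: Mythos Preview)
Your argument is correct and follows precisely the route the paper indicates: the paper does not give its own proof of this theorem but cites it as essentially due to Phillips, after recalling from \cite[Lemma~2.5]{GHT} that every Bernstein function decomposes as $f=f_1+z\,f_2$ with $f_1,f_2\in\Wip(\C_+)$, which is exactly the splitting you carry out explicitly. Your treatment of the Phillips formula via Fubini and the fundamental theorem, the core property via resolvent approximation, and the invertibility via $1/f\in\Wip(\C_+)$ are all standard and in line with the intended argument.
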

For the detailed theory of operator Bernstein functions we refer to \cite{SchilSonVon2010}.

The class of Bernstein functions is quite large and to ensure good
algebraic and function-theoretic properties of  Bersntein
functions it is convenient and also sufficient for  many purposes
to consider its subclass consisting of complete Bernstein
functions.  A Bernstein function is called  {\em complete} if its
representing measure in the Levy-Khintchine formula
\eqref{hpfc.e.bf}  has a completely monotone density with respect
to Lebesgue measure, see \cite[Definition 6.1]{SchilSonVon2010}.

To discuss other representations of complete Bernstein functions,
more suitable for the goals of this paper, we will also need yet
another related class of  functions.
 A function $g : (0, \infty) \to \R_+$
is called  {\em Stieltjes} if it can be written as
\begin{equation}\label{hpfc.e.stieltjes}
g(z) = a + \frac{b}{z} + \int_{0+}^\infty \frac{\mu(d{s})}{z +
s}, \qquad z > 0,
\end{equation}
where $a, b \ge 0$ and $\mu$ is a positive Radon measure on $(0,
\infty)$ satisfying
\begin{equation}\label{mmu}
 \int_{0+}^{\infty}
\frac{\mu(d{s})}{1 + s} < \infty.
\end{equation}

 Since the
representation \eqref{hpfc.e.stieltjes} is unique, the measure
$\mu$ is called a {\em Stieltjes measure} for $g$ and
\eqref{hpfc.e.stieltjes} is called the {\em Stieltjes
representation} for $g$, see e.g. \cite[Chapter
2]{SchilSonVon2010}. We will then write $g \sim (a,b, \mu).$
Note that
\begin{equation}\label{ab}
a=g(\infty), \qquad b = \lim_{z \to 0+} z g(z).
\end{equation}

The following result (see \cite[Theorem 6.2 and Corollary
7.4]{SchilSonVon2010}) shows, in particular, a reciprocal duality between
complete Bernstein and Stieltjes functions, and it will be crucial for the sequel.
\begin{theorem}\label{hpfc.t.sti-char}
A non-zero function $g$ is a Stieltjes function if and only $z
g(z), z>0,$ is a complete Bernstein function, if and only if $1/g$
 is a complete Bernstein function.
 \end{theorem}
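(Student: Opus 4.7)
The plan is to establish both equivalences by moving through integral representations rather than proving $g$ Stieltjes $\Leftrightarrow zg(z)$ complete Bernstein and $zg(z)$ complete Bernstein $\Leftrightarrow 1/g$ complete Bernstein as essentially independent assertions; each half reduces to a computation plus Bernstein's theorem on completely monotone functions.

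First I would handle the equivalence between $g$ being Stieltjes and $zg(z)$ being complete Bernstein. Starting from the Stieltjes representation $g \sim (a,b,\mu)$, the key identity is
\[
\frac{z}{z+s} \;=\; \int_0^\infty \bigl(1-e^{-zt}\bigr)\, s\, e^{-st}\, dt \qquad (z,s>0),
\]
which follows by splitting the right-hand side and using $\int_0^\infty s e^{-st}\,dt = 1$. Multiplying the Stieltjes representation by $z$ and applying Fubini (legitimate by the integrability assumption \eqref{mmu} on $\mu$) yields
\[
zg(z) = b + az + \int_0^\infty (1-e^{-zt})\, m(t)\,dt, \qquad m(t) := \int_{0+}^\infty s\,e^{-st}\,\mu(ds),
\]
and $m$ is completely monotone as the Laplace transform of the positive measure $s\,\mu(ds)$. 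This places $zg(z)$ in the complete Bernstein class with Lévy--Khintchine triple $(b,a,m(t)\,dt)$. Conversely, given a complete Bernstein function $f$ with such a triple, Bernstein's theorem produces a measure $\nu$ with $m(t) = \int_{0+}^\infty e^{-st}\nu(ds)$; reversing the Fubini step and identifying $\mu$ via $\mu(ds)= s^{-1}\nu(ds)$ recovers a Stieltjes representation of $f(z)/z$, with the integrability condition \eqref{mmu} following from the Lévy integrability condition on $m(t)\,dt$.

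Second I would deduce $g$ Stieltjes $\Leftrightarrow 1/g$ complete Bernstein via the analytic (Nevanlinna--Pick) picture, which is the cleanest route. A non-zero Stieltjes function $g$ extends to a holomorphic function on $\mathbb{C}\setminus(-\infty,0]$, is strictly positive on $(0,\infty)$, and satisfies $\im g(z)\le 0$ on the upper half-plane (directly from \eqref{hpfc.e.stieltjes}, since $\im \frac{1}{z+s} = -\frac{\im z}{|z+s|^2}$). The map $w\mapsto 1/w$ therefore sends $1/g$ to a holomorphic function on $\mathbb{C}\setminus(-\infty,0]$ which is positive on $(0,\infty)$ and satisfies $\im(1/g(z))\ge 0$ on the upper half-plane; this is precisely the Nevanlinna--Pick characterization of complete Bernstein functions. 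The converse direction applies the same inversion.

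The main obstacle is the second equivalence: one must justify the Nevanlinna--Pick characterization of complete Bernstein (and Stieltjes) functions, or, if one prefers to stay entirely within the integral representations, explicitly compute the Stieltjes measure of $1/f$ for a complete Bernstein function $f$ given by its Lévy--Khintchine triple. The explicit route is delicate because the reciprocal mixes the atom at zero, the linear term, and the continuous part in a non-trivial way; invoking the holomorphic characterization (which in this paper's context can simply be cited from \cite{SchilSonVon2010}) avoids that computation and makes the duality between the two classes essentially tautological once the boundary behaviour on $(0,\infty)$ is checked.
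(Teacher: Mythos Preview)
The paper does not give its own proof of this theorem: it is stated with an explicit reference to \cite[Theorem 6.2 and Corollary 7.4]{SchilSonVon2010} and then used as a black box. So there is no ``paper's proof'' to compare against; your proposal goes well beyond what the paper does.

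That said, your sketch is sound and is essentially the standard argument found in \cite{SchilSonVon2010}. The first equivalence via the identity $\frac{z}{z+s}=\int_0^\infty(1-e^{-zt})\,se^{-st}\,dt$ and Fubini is exactly how one passes between the Stieltjes representation of $g$ and the L\'evy--Khintchine representation of $zg(z)$ with completely monotone density. For the second equivalence you correctly identify that the cleanest route is the Nevanlinna--Pick (upper-half-plane) characterization of complete Bernstein functions, under which $g\mapsto 1/g$ becomes a trivial sign flip of the imaginary part; you are also right that doing this purely at the level of integral representations is messy. Your own final remark---that one can simply cite the holomorphic characterization from \cite{SchilSonVon2010}---is precisely what the paper does for the whole statement.
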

\begin{remark}
Thus every complete Bernstein function $f$ admits a unique representation
\begin{equation}\label{compbern}
f(z) = {a} + b z + \int_{0+}^\infty \frac{z }{z + s}\, \mu(d{s}),
\qquad z > 0,
\end{equation}
where $a, b \ge 0$ and $\mu$ is a positive Radon measure on $(0,
\infty)$ satisfying \eqref{mmu},
and we can speak of the {\it Stieltjes representation} $(a,b,\mu)$
of  $f,$ and  write $f \sim (a,b,\mu).$

However, there are also other representations for complete
Bernstein functions in the literature. For example, we note
the representation
\begin{equation}\label{compbern1}
f(z) = {a} + b z + \int_{0+}^\infty \frac{z \nu(d{s})}{1 +zs},
\qquad \int_{0+}^{\infty}\frac{\nu(d{s})}{1 +s} <\infty,
\end{equation}
used in particular in \cite{HirschInt} and \cite{HirschFA}. The
representations \eqref{compbern} are \eqref{compbern1} are
equivalent in the sense that one of them is transformed by the
change of variable $s=1/t$ into another so that the measures $\mu$
and $\nu$ satisfy the same integrability condition \eqref{mmu}.
\end{remark}

We will be interested in Stieltjes functions $g$  with the Stieltjes representation of the form $(0,0,\mu),$ and satisfying $g(0+)=\infty.$
Before going further, we give several elementary examples of such functions  important for the sequel.

\begin{example}\label{contex}
a) The functions $g_{\gamma}(z):=z^{-\gamma}, \gamma \in (0,1),$ are Stieltjes and
\[
\lim_{s\to 0+}\,g(s)=\infty, \quad g_{\gamma}(z)=\frac{\sin \pi\gamma}{\pi} \int_0^\infty
\frac{ds}{(z+s)s^\gamma}, \quad z >0.
\]
Accordingly, $f_{\gamma}(z)=zg_{\gamma}(z)=z^{1-\gamma}, \gamma \in (0,1), $ are complete Bernstein functions.

b) By  \cite[Example 2.9]{GHT} the function
\[
g(z):=\frac{\log z}{z-1}=
\int_0^\infty\frac{ds}{(z+s)(1+s)}, \qquad z >0,
\]
is  Stieltjes with $g(0+)=\infty,$ and so is the  function
$
g(z):=\frac{z-1}{z\log z}
$
by Theorem \ref{hpfc.t.sti-char}.
\end{example}

Let us show now that complete Bernstein and Stieltjes functions of the
generator $-A$
can be expressed in resolvent terms in accordance
with the formulas \eqref{hpfc.e.stieltjes} and \eqref{compbern}.
To this aim, we will need the notion of a sectorial operator. Recall that a linear operator $V$ on $X$ is called {\em sectorial}
if $(-\infty,0)\subset \rho(V)$ and there exists $c>0$ such that
\[
s \|(s+V)^{-1}\|\leq {c},\qquad s>0.
\]
\begin{theorem}\label{resolventbern}
Let  $-A$ be the generator of a bounded $C_0$-semigroup on $X$.
\begin{itemize}
\item [{\it (i)}] If $f\sim (0,0,\mu)$   is a complete Bernstein function, then
\begin{equation}\label{completeb}
f(A)x=\int_{0+}^{\infty}{A}(s+A)^{-1}x \,\mu(ds)
\end{equation}
for every $x \in \dom (A).$ Moreover, $\dom (A)$ is a core for $f(A).$
\item [{\it (ii)}] If $g\sim (0,0,\mu)$ is  a Stieltjes function and $A$ has dense range, then
$g$ belongs to the extended HP-calculus and
\begin{equation}\label{stiltcalc}
g(A)x=\int_{0+}^{\infty}(s+A)^{-1}x \,\mu(ds)
\end{equation}
for every $x \in \ran (A).$ Moreover, $\ran(A)$ is a core for $g(A).$
\end{itemize}
\end{theorem}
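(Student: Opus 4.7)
For part \textit{(i)}, the plan is to convert the Stieltjes form of $f$ into its L\'evy--Khintchine form and then invoke Theorem~\ref{hpfc.c.bf-fc}. Starting from $\frac{s}{z+s}=s\int_0^\infty e^{-(z+s)t}\,dt$ one rearranges to $\frac{z}{z+s}=\int_0^\infty(1-e^{-tz})\,se^{-ts}\,dt$, so that by Fubini
\[
 f(z)=\int_0^\infty(1-e^{-tz})\,\nu(dt),\qquad \nu(dt)=\Bigl[\int_{0+}^\infty\! se^{-ts}\,\mu(ds)\Bigr]dt.
\]
Thus $f$ has L\'evy--Khintchine triple $(0,0,\nu)$, and Theorem~\ref{hpfc.c.bf-fc} yields $f(A)x=\int_0^\infty(I-T(t))x\,\nu(dt)$ for every $x\in\dom(A)$. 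The two-sided bound $\|(I-T(t))x\|\le\min\bigl(tM\|Ax\|,(M+1)\|x\|\bigr)$ (where $M=\sup_{t\ge 0}\|T(t)\|$), combined with \eqref{mmu}, justifies exchanging the order of integration; the inner $t$-integral then evaluates to $x-s(s+A)^{-1}x=A(s+A)^{-1}x$ via the Laplace representation of the resolvent. This yields \eqref{completeb}, and the core assertion is the last line of Theorem~\ref{hpfc.c.bf-fc}.

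For part \textit{(ii)}, my first move is to note that \eqref{cc} forces $\ker(A)\cap\cls{\ran}(A)=\{0\}$; combined with $\cls{\ran}(A)=X$ this makes $A$ injective. By Theorem~\ref{hpfc.t.sti-char}, $f(z):=zg(z)$ is a complete Bernstein function, and a direct computation using \eqref{ab} gives $f(0+)=\lim_{z\to 0+}zg(z)=0$ and $\lim_{z\to\infty}f(z)/z=g(\infty)=0$, so the Stieltjes triple of $f$ is again $(0,0,\mu)$. To place $g$ in the extended HP-calculus I would use the regularizer $e(z):=z/(1+z)^2$. Since every Bernstein function decomposes as $f=f_1+zf_2$ with $f_1,f_2\in\Wip(\C_+)$ (as recalled in \cite[Lemma~2.5]{GHT}), and both $1/(1+z)^2$ and $z/(1+z)^2$ lie in $\Wip(\C_+)$, the product $eg=f/(1+z)^2$ lies in $\Wip(\C_+)$; while $e(A)=A(1+A)^{-2}$ is injective because $A$ is. Hence $g$ is regularizable.

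To prove the formula, fix $x=Ay$ with $y\in\dom(A)$ and set $w:=\int_{0+}^\infty(s+A)^{-1}x\,\mu(ds)$. Convergence of $w$ follows from $(s+A)^{-1}x=A(s+A)^{-1}y$ and the twin bound $\|A(s+A)^{-1}y\|\le\min\bigl((1+C)\|y\|,\,C\|Ay\|/s\bigr)$ against \eqref{mmu}. By the HP product rule and $(1+A)^{-2}x\in\dom(A)\subseteq\dom(f(A))$, one obtains $(eg)(A)x=f(A)(1+A)^{-2}x$, which by part (i) equals $\int A(s+A)^{-1}(1+A)^{-2}x\,\mu(ds)$. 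On the other hand, pulling the bounded operator $A(1+A)^{-2}$ inside the integral defining $e(A)w$ and using commutation of resolvents yields the same expression. Therefore $e(A)w=(eg)(A)x$, which by the definition of the extended calculus means $x\in\dom(g(A))$ and $g(A)x=w$. For the core statement, approximate $x\in\dom(g(A))$ by $x_\veps:=A(\veps+A)^{-1}x\in\ran(A)$: since $\cls{\ran}(A)=X$, we have $x_\veps\to x$, and because $A(\veps+A)^{-1}$ commutes with $g(A)$ one gets $g(A)x_\veps=A(\veps+A)^{-1}g(A)x\to g(A)x$.

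The principal obstacle lies in part (ii): one must simultaneously verify the regularizability of $g$, establish the formula only on $\ran(A)$ (since $\int\|(s+A)^{-1}x\|\mu(ds)$ need not converge for a general $x\in X$), and carefully interchange bounded operators, unbounded operators, and the integral against $\mu$ -- all of which is consistent only by virtue of the decomposition $f=f_1+zf_2$ and the commutation properties of the resolvents.
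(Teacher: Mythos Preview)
Your proof is correct and follows essentially the same route as the paper: for (i) you spell out the transformation from the Stieltjes to the L\'evy--Khintchine form (which the paper simply cites to \cite[p.~149]{SchilSonVon2010}) and invoke Theorem~\ref{hpfc.c.bf-fc}; for (ii) you regularize $g$ via $z/(1+z)^2$ where the paper uses $z/(z+1)$, but the computation of $g(A)x$ through $f=zg$ and part~(i) is the same, and your core argument $x_\veps=A(\veps+A)^{-1}x$ is exactly the paper's $e_t(A)=t(t+A^{-1})^{-1}$ under the substitution $\veps=1/t$.
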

\begin{proof}
The proof of \eqref{completeb} relies on a direct transforming \eqref{phillips} to the form \eqref{completeb}
 by means of the definition of a complete Bernstein function
and can be found in \cite[p. 149]{SchilSonVon2010}. The fact that $\dom(A)$ is a core for $f(A)$ follows from
Theorem \ref{hpfc.c.bf-fc}. Thus {\it (i)} is a straightforward consequence of Theorem \ref{hpfc.c.bf-fc}.

To prove {\it (ii)} note that by Theorem \ref{hpfc.t.sti-char} if
$g$ is a Stieltjes function then $g(z)=q(z)/z$ for some complete
Bernstein function $q.$ Since by \cite[Lemma 2.5]{GHT} $q$ is
regularizable by $1/(z+1)$ and $A$ is injective in view of
\eqref{cc}, the function  $g$ is regularizable by ${z}/({z+1})$
and belongs to the extended HP-calculus. Moreover, if $x \in \ran
(A),$ then using the product rule for the extended HP-calculus we
obtain
\begin{eqnarray*}
g(A)x&=& \left[\frac{z+1}{z} \cdot z g \cdot  \frac{1}{z+1} \right](A)x\\
&=& (A+I)A^{-1} \int_{0+}^{\infty}{A}(s+A)^{-1} (A+I)^{-1} x \,\mu(ds)\\
&=& \int_{0+}^{\infty}(s+A)^{-1} x \,\mu(ds).
\end{eqnarray*}

It remains to prove that $\ran (A)$ is a core for $g(A).$ Observe that by e.g. \cite[Proposition 2.2.1,b]{Haa2006} the operator $A^{-1}$ is sectorial with dense domain
$\ran(A).$ Hence
if
$e_t(A)=t(t+ A^{-1})^{-1},$ $t>0,$ then $e_t(A) x \to x$ for every
$x \in X$ as  $t \to \infty$ by \cite[Proposition 2.2.1, c]{Haa2006}.
Since $e_t \in \Wip(\C_+)$  for each $ t>0$, the product rule \eqref{hpfc.e.prod}
implies that
if $x \in \dom
(g(A))$ and $g(A)x = y$ then $g(A)e_t(A)x = e_t(A)y.$ As $\ran
(e_t(A))= \dom (A^{-1})=\ran(A),$ the statement follows.
\end{proof}

Let $g$ be a Stieltjes function with the Stieltjes representation
$(0,0,\mu),$ i.e.
\begin{equation}\label{fung}
g(z)=\int_{0+}^\infty \frac{\mu(ds)}{z+s}, \quad z>0,\quad
\int_{0+}^{\infty} \frac{\mu(ds)}{1+s}<\infty.
\end{equation}
If a complete Bernstein function $h$ is given by
\begin{equation}\label{berhirsch}
h(z):=g(1/z)=\int_{0+}^\infty \frac{z\,\mu(ds)}{1+zs}.
\end{equation}
 and a linear operator $V$ is  sectorial
then the operator $h(V)$ can be defined  as  the
closure of a (closable) linear operator $h_0(V)$ given  by the
formula
\begin{equation}\label{hirschform}
h_0(V)x=\int_{0+}^\infty V(1+sV)^{-1}x\,\mu(ds),\quad x\in
\dom(V).
\end{equation}
This definition is due to Hirsch and it was introduced and thoroughly studied in his paper  \cite{HirschInt}.
Thus $h(V)$ is a closed linear operator and $\dom
(h(V))\supset \dom (V).$

 If $A^{-1}$ is a sectorial operator  with  dense domain
$\ran(A),$ then setting $V=A^{-1}$ in \eqref{hirschform}
we obtain
\begin{eqnarray}
\label{relationh} h(A^{-1})x =\int_{0+}^\infty
(A+s)^{-1}x\,\mu(ds) = g(A)x,\qquad x\in \ran(A).
\end{eqnarray}
Hence the operators $h(A^{-1})$ and $g(A)$ coincide on their core
$\dom(A^{-1})=\ran(A)$ and therefore coincide. In other words, $g(A)$ defined in the extended HP-calculus
coincides with $h(A^{-1})$ defined by means  of \eqref{hirschform}.

Hirsch proved in \cite{HirschInt}  a number of properties of complete
Bernstein functions of sectorial operators.
We will need two of them which we state as a lemma. For their proofs
see \cite[Theorem 1]{HirschInt}
and \cite[Theorem 3]{HirschInt}.
\begin{lemma}\label{Hirschl}
Let $f$ and $q$ be complete Bernstein functions and let $A$ be a
sectorial operator with dense range. Then
\begin{itemize}
\item [{\it (i)}] $f(A)$ is a sectorial operator with dense range;
\item [{\it (ii)}] $(f \circ q)(A)=f(q(A)).$
\end{itemize}
\end{lemma}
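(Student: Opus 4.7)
My plan hinges on the reciprocal duality between complete Bernstein and Stieltjes functions from Theorem \ref{hpfc.t.sti-char} and the integral representations \eqref{stiltcalc} and \eqref{hirschform}. The key observation is that for complete Bernstein $f$ and $t>0$, the sum $t+f$ is again complete Bernstein, and hence $h_t:=1/(t+f)$ is Stieltjes; this will realize $(t+f(A))^{-1}$ as a bounded Stieltjes function of $A$ and deliver the required sectorial estimate.

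For part (i), write $f\sim(a_f,b_f,\mu_f)$ in the Stieltjes form \eqref{compbern} and realize $f(A)$ on $\dom(A)$ via Hirsch's formula \eqref{hirschform} with $V=A$. For each $t>0$, the Stieltjes function $h_t$ yields a bounded operator $h_t(A)$ via \eqref{stiltcalc}. The product rule \eqref{hpfc.e.prod} of the extended HP-calculus gives $(t+f(A))h_t(A)=I$ and $h_t(A)(t+f(A))\subseteq I$, so $-t\in\rho(f(A))$ and $(t+f(A))^{-1}=h_t(A)$. Substituting the Stieltjes representation of $h_t$ into \eqref{stiltcalc}, using the sectorial bound $\|(s+A)^{-1}\|\le c/s$, and controlling the total mass of the Stieltjes measure of $h_t$ by $O(1/t)$ produces $t\|(t+f(A))^{-1}\|\le C$. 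For density of $\ran(f(A))$: if $a_f>0$ then $1/f$ is a bounded Stieltjes function and $(1/f)(A)$ is a bounded right inverse of $f(A)$, so $f(A)$ is surjective; otherwise one shows $\ran(A)\subseteq\cls{\ran}(f(A))$ from the integral representation of $f(A)$ together with the dense range of $A$.

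For part (ii), (i) applied to $q$ gives that $q(A)$ is sectorial with dense range, so Hirsch's formula \eqref{hirschform} with $V=q(A)$ defines $f(q(A))$. Since the composition of complete Bernstein functions is complete Bernstein, $(f\circ q)(A)$ is likewise defined by \eqref{hirschform} with $V=A$. To compare them, take $x\in\dom(A)$ and expand
\[
f(q(A))x = a_f x + b_f q(A)x + \int_{0+}^\infty q(A)(s+q(A))^{-1}x\,\mu_f(ds).
\]
The integrand $q(A)(s+q(A))^{-1}x$ equals $[q(z)/(s+q(z))](A)x$ in the extended HP-calculus of $A$ via the product rule, and the scalar function $z\mapsto q(z)/(s+q(z))$ is complete Bernstein with a Stieltjes representation derivable from that of $q$. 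Substituting these representations and interchanging the $\mu_f$- and $\mu_q$-integrals by Fubini reproduces exactly the Hirsch-formula expansion of $(f\circ q)(A)x$ at $V=A$. Since $\dom(A)$ is a core for $(f\circ q)(A)$ (Theorem \ref{hpfc.c.bf-fc}) and, via the inclusions $\dom(A)\subseteq\dom(q(A))\subseteq\dom(f(q(A)))$ plus a further core argument, also for $f(q(A))$, equality on this dense subset extends to equality of closed operators.

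The main obstacle is the Fubini interchange in part (ii): it must be justified while the measures $\mu_f$ and $\mu_q$ are only finite against the weight $s/(1+s)$, and the resulting iterated integral has to be matched with the Stieltjes triple of $f\circ q$ on the nose. A secondary difficulty is showing that $\dom(A)$ is a common core for $(f\circ q)(A)$ and $f(q(A))$ so that pointwise equality upgrades to operator equality; this hinges on the core statements in Theorem \ref{hpfc.c.bf-fc}, Theorem \ref{resolventbern}, and their analogues in Hirsch's framework.
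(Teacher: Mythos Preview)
The paper does not prove this lemma at all; it simply cites \cite[Theorem~1]{HirschInt} and \cite[Theorem~3]{HirschInt}. So your proposal is not competing with a proof in the paper but rather sketching what Hirsch's original argument looks like, and in broad outline it does follow that route: realize $(t+f(A))^{-1}$ as the bounded Stieltjes function $h_t=1/(t+f)$ applied to $A$, bound its norm by the total Stieltjes mass $h_t(0+)\le 1/t$, and handle the composition by expanding both sides via the resolvent integral and applying Fubini.

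There is, however, a genuine framework mismatch in your write-up. The lemma is stated for an \emph{arbitrary} sectorial operator $A$ with dense range, and in the paper's intended application the operator is $A^{-1}$, which is sectorial but generally not the negative generator of a bounded $C_0$-semigroup. Yet the tools you invoke---the product rule \eqref{hpfc.e.prod}, formula \eqref{stiltcalc}, and the core assertions of Theorems \ref{hpfc.c.bf-fc} and \ref{resolventbern}---are established in this paper only within the (extended) Hille--Phillips calculus, i.e.\ for semigroup generators. None of them is available as stated for a general sectorial $A$. To carry out your plan rigorously you must work entirely inside Hirsch's calculus \eqref{hirschform} (or the holomorphic sectorial calculus of \cite{Haa2006}) and prove the needed product rule and core statements there; that is precisely the content of \cite{HirschInt}. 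The two obstacles you flag---justifying the Fubini step under the weak integrability $\int \mu(ds)/(1+s)<\infty$ and showing $\dom(A)$ is a common core---are indeed the delicate points, but they too must be settled in the sectorial framework, not the HP one.
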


The property \eqref{relationh}
will allow us to link several results from \cite{HirschInt} and \cite{HirschFA} to our
setting of Stieltjes functions of semigroup generators.
\begin{lemma}
Let  $-A$ be the generator of a bounded $C_0$-semigroup on $X,$ with dense range. If $f$ is a complete Bernstein function and
$g$ is a Stieltjes function, then their composition $f\circ g$ belongs to the extended HP-calculus, and $f(g(A))=(f\circ g) (A).$
As a consequence,
\begin{equation}\label{inclusion}
\dom ((f\circ g)(A))
\supset \dom (g(A)).
\end{equation}
\end{lemma}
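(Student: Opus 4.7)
The plan is to use the identification \eqref{relationh} between $g(A)$ (a Stieltjes function of the generator) and $h(A^{-1})$ (a complete Bernstein function of the inverse), combined with the composition rule for complete Bernstein functions of sectorial operators (Lemma \ref{Hirschl} (ii)). First, the hypothesis $\cls{\ran}(A) = X$ together with \eqref{cc} forces $\ker(A) = \{0\}$: any $x \in \ker(A)$ is a fixed point of the semigroup so $\Ce_t(A)x = x$, whereas $\cls{\ran}(A) = X$ forces $\Ce_t(A)x \to 0$. Hence $A^{-1}$ is well-defined, densely defined, and sectorial. Set $h(z) := g(1/z)$, which is complete Bernstein in the form \eqref{berhirsch}; by \eqref{relationh}, $g(A) = h(A^{-1})$, and by Lemma \ref{Hirschl} (i) this operator is sectorial with dense range.

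Next, apply Lemma \ref{Hirschl} (ii) with the two complete Bernstein functions $f,h$ and the sectorial operator $A^{-1}$ to obtain
\[
f(g(A)) \;=\; f(h(A^{-1})) \;=\; (f\circ h)(A^{-1}) \;=:\; p(A^{-1}),
\]
where $p := f\circ h$ is again complete Bernstein. Since $p(z)=f(g(1/z))$, we have $p(1/z)=(f\circ g)(z)$. The central function-theoretic step is to verify that $f\circ g$ is itself a Stieltjes function: once this is known, $f\circ g$ belongs to the extended HP-calculus by Theorem \ref{resolventbern} (ii), and applying \eqref{relationh} a second time (with $f\circ g$ in place of $g$ and $p$ in place of $h$) will yield $(f\circ g)(A)=p(A^{-1})=f(g(A))$.

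To verify that $f\circ g$ is Stieltjes, I appeal to the standard Nevanlinna-type characterizations: a Stieltjes function maps $\C_+$ into the closed lower half-plane, while a complete Bernstein function preserves the closed upper half-plane and, by reflection across $\R_+$, sends the closed lower half-plane into itself. Combined with the obvious nonnegativity on $(0,\infty)$, this produces the Stieltjes property of $f\circ g$. Finally, the inclusion $\dom((f\circ g)(A))\supset \dom(g(A))$ follows because $g(A)$ is sectorial with dense range (Lemma \ref{Hirschl} (i)), so the Hirsch construction \eqref{hirschform} applied to $V=g(A)$ with the complete Bernstein function $f$ gives $\dom(g(A))\subset \dom(f(g(A)))=\dom((f\circ g)(A))$. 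The only genuinely non-formal step is the function-theoretic verification that $f\circ g$ is Stieltjes; the rest is bookkeeping within the two interlocking calculi (the extended HP-calculus of $A$ and the Hirsch calculus of $A^{-1}$), relying entirely on already-established lemmas.
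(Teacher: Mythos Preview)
Your proof is correct and follows essentially the same route as the paper: both identify $g(A)$ with $h(A^{-1})$ via \eqref{relationh}, invoke Lemma~\ref{Hirschl}\,(ii) to compose, observe that $f\circ g$ is Stieltjes, and then read off the domain inclusion. The only cosmetic differences are that the paper cites \cite[Theorem~7.5]{SchilSonVon2010} for the Stieltjes property of $f\circ g$ (where you sketch the Nevanlinna argument) and invokes Theorem~\ref{hpfc.c.bf-fc} for the final inclusion (where you use the Hirsch construction \eqref{hirschform} directly).
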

\begin{proof}
By  assumption, $A^{-1}$ is sectorial with dense domain. Note that if $g$ is a Stieltjes function and a complete Bernstein function  $h$ is given by
\eqref{berhirsch}, then  $g(A)=h(A^{-1}),$ where $h(A^{-1})$ is defined by means of \eqref{hirschform}. By Lemma \ref{Hirschl},{\it (i)}
the operator $h(A^{-1})$ is  sectorial with dense range, so
$g(A)$ is the same.

 Observe also that  the composition $f \circ g$ is Stieltjes, see e.g \cite[Theorem 7.5]{SchilSonVon2010}.
Then  using Lemma \ref{Hirschl}, {\it (ii)} and \eqref{relationh}
we conclude that
$$f(g(A))=f(h(A^{-1}))=(f \circ h)(A^{-1})=[(f \circ h)(1/z)](A)=(f\circ g) (A).$$
 This implies, in particular, by Theorem \ref{hpfc.c.bf-fc} that
 $\dom ((f\circ g)(A))=\dom f (g(A))
\supset \dom (g(A)).$
\end{proof}

The next statement describing  domains of operator Stieltjes
functions in resolvent terms  is basic for the paper. It is in fact
a reformulation of \cite[Theorem 2]{HirschFA} based on
\eqref{relationh}.
\begin{theorem}[Hirsch Criterion]\label{stilprop1} If $-A$ is the generator of a bounded
$C_0$-semigroup on $X$ such that $\cls{\ran}(A)=X$ and
$g$ is a Stieltjes function given by \eqref{fung}, then
\begin{eqnarray*}
x\in \dom(g(A))\,&\iff&\,\,
\exists\,\,
\text{weak}-\lim_{\delta \to 0+}\int_\delta^1 (A+s)^{-1}x\,\mu(ds)
\\
&\iff&\,\,
\exists\,\,
\text{strong}-\lim_{\delta \to 0+}\int_\delta^1 (A+s)^{-1}x\,\mu(ds).
\label{stilvec}
\end{eqnarray*}
\end{theorem}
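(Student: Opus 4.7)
My plan is to deduce this from Hirsch's original criterion \cite[Theorem 2]{HirschFA} for domains of complete Bernstein functions of sectorial operators, via the identification already established in \eqref{relationh} between $g(A)$ and $h(A^{-1})$.

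First I would verify the setup. Since $(T(s))_{s\ge 0}$ is bounded and $\cls{\ran}(A)=X$, the decomposition \eqref{cc} forces $\ker(A)=\{0\}$, so $A^{-1}$ exists as a closed operator. The standard estimate $\|s(s+A)^{-1}\|\le M$ for $s>0$ translates into the sectoriality of $A^{-1}$, whose natural domain is $\ran(A)$, dense in $X$ by assumption. Setting $h(z):=g(1/z)$, by Theorem \ref{hpfc.t.sti-char} (in the form \eqref{berhirsch}) $h$ is a complete Bernstein function with Levy--Khintchine data coming from the same measure $\mu$, and the computation leading to \eqref{relationh} shows $g(A)=h(A^{-1})$ (both as operators in the extended HP-calculus and in Hirsch's sense).

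Next I would invoke Hirsch's criterion for $h(A^{-1})$: an element $x\in X$ belongs to $\dom(h(A^{-1}))$ if and only if the truncated Hirsch integrals $\int_\delta^{1/\delta} A^{-1}(1+sA^{-1})^{-1}x\,\mu(ds)$ admit a weak (equivalently, strong) limit as $\delta\to 0+$. The algebraic identity
\[
A^{-1}(1+sA^{-1})^{-1}=(A+s)^{-1}
\]
(valid as bounded operators, since $A$ is injective) converts Hirsch's integrals into resolvent integrals for $A$:
\[
\int_\delta^{1/\delta}(A+s)^{-1}x\,\mu(ds).
\]

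Finally I would dispose of the upper truncation. The bound $\|(A+s)^{-1}\|\le M/s$ for $s\ge 1$, combined with the Stieltjes integrability condition \eqref{mmu}, yields
\[
\int_1^\infty\|(A+s)^{-1}x\|\,\mu(ds)\,\le\,M\|x\|\int_1^\infty\frac{\mu(ds)}{s}\,<\,\infty
\]
for every $x\in X$, so the tail $\int_1^\infty(A+s)^{-1}x\,\mu(ds)$ converges absolutely and contributes a harmless bounded term as $\delta\to 0+$. Thus the existence of the weak/strong limit of $\int_\delta^{1/\delta}$ is equivalent to that of $\int_\delta^{1}$, which gives the stated characterization; the weak--strong equivalence is inherited from Hirsch's formulation.

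The only place requiring some care is matching Hirsch's conventions: his representation in \eqref{hirschform} uses the measure appearing in \eqref{compbern1}, while our $\mu$ comes from \eqref{hpfc.e.stieltjes}. Under the change of variables $s\mapsto 1/s$ these two measures carry exactly the same integrability condition \eqref{mmu}, so the truncation $[\delta,1]$ for $\mu$ on the $g$-side corresponds to a Hirsch truncation on the $h$-side to which his theorem applies verbatim; this bookkeeping is the main (though routine) technical point.
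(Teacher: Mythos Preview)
Your proposal is correct and follows exactly the route the paper indicates: the paper does not give a detailed proof but simply states that the theorem ``is in fact a reformulation of \cite[Theorem 2]{HirschFA} based on \eqref{relationh},'' and you have spelled out precisely that reformulation (sectoriality of $A^{-1}$, the identity $A^{-1}(1+sA^{-1})^{-1}=(A+s)^{-1}$, and the absolute convergence of the tail $\int_1^\infty$). Your final paragraph's worry about measure conventions is in fact already absorbed by the paper's setup in \eqref{berhirsch}--\eqref{hirschform}, where the same $\mu$ is used throughout, so no change of variables is needed at that stage.
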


\section{Rates in mean ergodic theorem}\label{section3}

For the whole of this section, we make the following assumptions:
\begin{align*}
 &-A \,\, \text{\emph{is the generator of a}}\,\, C_0-\text{\emph{semigroup}} (T(t))_{t \ge 0},\\
 &M:=\sup_{t \ge 0}\norm{T(t)}<\infty, \,\, \text{\emph{and}} \,\, \, \overline{\ran} (A)=X,
 \end{align*}
 (recall that in this case,  by \eqref{cc}, ${\rm ker} (A)=\{0\}$) and
 \begin{align*}
g \,\,\text{\emph{is a Stieltjes function}}, \,\,\, g \sim (0,0,\mu),\quad  g(0+):=\infty.
\end{align*}

Let us comment on the above assumptions on $g$. For technical reasons, it
will be  more convenient for us to  consider  Stieltjes functions
as above than  those  of the general form \eqref{hpfc.e.stieltjes}, \eqref{mmu}.
To see that we do not loose generality indeed, note that
we may assume  $a=0$ (that is  $ \lim_{s\to\infty}\,g(s)=0)$
in view of  $\dom(g(A)+a)=\dom (g(A))$.
If $b\not =0$ then by passing to the reciprocal complete Bernstein function $1/g$ and using
\cite[Corollary 12.7]{SchilSonVon2010} we infer that
$\dom(g(A))=\ran(A),$ so that the Ces\'aro means $\Ce_t(A)x$ for $x \in \dom(g(A))$ decay at
the extremal rate:
\begin{equation}\label{extreme}
\|\Ce_t(A)x\|={\rm O} \left({t}^{-1}\right),\qquad t \to \infty,\quad x\in \dom (g(A)).
\end{equation}
The inverse theorems given below become void in this case, see Remark \ref{ApB} in Appendix.
  Finally if  $g(0+)<\infty$ then our direct theorem on rates from \cite{GHT} (see also Theorem \ref{ratebern} below) does not yield any rate of decay of $\Ce_t(A)$ restricted to
 $\dom(g(A))$ since $1/g(1/t) \not \to 0 , t \to \infty$ in this
 case.

We start with an elementary inequality which  will nevertheless
be essential in the proof of direct theorems for rates  in both
discrete and continuous cases.
\begin{lemma}\label{exp}
Let $f \sim (0,0,\mu)$  be a complete Bernstein function.
Then
\begin{equation}
\frac{1}{t}\int_{0+}^\infty \frac{1-e^{-st}}{s}\mu(ds)\leq 2f(t^{-1}),\qquad t>0.
\label{ft1}
\end{equation}
\end{lemma}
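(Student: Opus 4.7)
My plan is to compare the two sides of \eqref{ft1} under the integral sign against the common measure $\mu$. First, since $f\sim (0,0,\mu)$ is complete Bernstein, its Stieltjes representation \eqref{compbern} gives
\[
f(t^{-1}) \;=\; \int_{0+}^\infty \frac{t^{-1}}{t^{-1}+s}\,\mu(ds) \;=\; \int_{0+}^\infty \frac{1}{1+st}\,\mu(ds), \qquad t>0,
\]
while the left-hand side of \eqref{ft1} equals
\[
\int_{0+}^\infty \frac{1-e^{-st}}{st}\,\mu(ds).
\]
Hence it suffices to prove the elementary pointwise inequality
\[
\frac{1-e^{-u}}{u} \;\le\; \frac{2}{1+u}, \qquad u>0,
\]
applied with $u=st$ and then integrated against $\mu$.

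To verify the elementary inequality I would use the standard bound $1-e^{-u}\le \min(u,1)$, which immediately yields
\[
\frac{1-e^{-u}}{u} \;\le\; \min\!\Bigl(1,\tfrac{1}{u}\Bigr) \;=\; \frac{1}{\max(1,u)}.
\]
Finally, since $2\max(1,u)\ge 1+u$ for every $u>0$ (check the two cases $u\le 1$ and $u\ge 1$ separately), one obtains
\[
\frac{1}{\max(1,u)} \;\le\; \frac{2}{1+u},
\]
which completes the proof.

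I do not anticipate any real obstacle: the only subtle point is recognising that the representation \eqref{compbern} of a complete Bernstein function makes the two sides of \eqref{ft1} into integrals against the same measure $\mu$, after which the lemma reduces to a one-variable calculus inequality. The factor $2$ is essentially sharp and comes precisely from the mismatch between $\min(1,1/u)$ and $2/(1+u)$ near $u=1$.
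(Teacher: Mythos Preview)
Your proof is correct and follows essentially the same approach as the paper: both reduce \eqref{ft1} to the pointwise inequality $\dfrac{1-e^{-u}}{u}\le \dfrac{2}{1+u}$ and then integrate against $\mu$. The only difference is that the paper simply states this elementary inequality, whereas you supply a short justification via $1-e^{-u}\le \min(u,1)$ and $2\max(1,u)\ge 1+u$.
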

\begin{proof}
Since
\[
\frac{1-e^{-x}}{x}\leq \frac{2}{1+x},\qquad x>0,
\]
we have
\[
\frac{1-e^{-st}}{ts}\,\leq\frac{2}{1+ts}=\frac{2t^{-1}}{t^{-1}+s},\qquad
s,t>0.
\]
Hence
\[
\frac{1}{t}\int_{0+}^\infty \frac{1-e^{-st}}{s}\mu(ds)\leq
2\int_{0+}^\infty \frac{t^{-1}\,\mu(ds)}{t^{-1}+s}=2f(t^{-1}),\qquad t>0.
\]
\end{proof}

First we derive a convergence rate for $\Ce_t(A)x$ for $x \in
\dom(g(A)),$ or equivalently for $x \in \ran ([1/g](A))$ with $1/g$
being a complete Bernstein function.  The following theorem is a
partial case of \cite[Theorem 3.4 and Proposition 4.2]{GHT} where
the convergence rate was obtained in terms of the limit behavior
of $1/g$ at zero for the whole class of Bernstein functions. However, in
the particular situation of complete Bernstein functions
we give an  argument which is simpler  and more transparent than that from \cite{GHT}. Moreover, it illustrates
nicely our functional calculus approach  and makes the presentation
self-contained.

\begin{theorem}\label{ratebern}
 If $x \in
\dom (g(A))$ then
\begin{equation}\label{rateber}
\|\Ce_t(A)x \|\leq
4M \frac{ \| g(A)x\|}{g (t^{-1})},\qquad t>0.
\end{equation}
\end{theorem}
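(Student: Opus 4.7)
The plan is to transfer the estimate onto the reciprocal complete Bernstein function and then use the Hille--Phillips product rule. Since $g\sim (0,0,\mu)$ is Stieltjes with $g(0+)=\infty$, Theorem~\ref{hpfc.t.sti-char} tells us that $h:=1/g$ is a complete Bernstein function with $h(0+)=0$; in its Stieltjes representation
\[
h(z)=bz+\int_{0+}^{\infty}\frac{z\,\nu(ds)}{z+s},\qquad b\ge 0.
\]
Because $hg\equiv 1$ in the extended HP-calculus, one has $h(A)g(A)\subseteq I$, and so for $x\in\dom(g(A))$ with $y:=g(A)x$ we get $x=h(A)y$. Thus the task reduces to estimating $\Ce_t(A)h(A)y$.

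Next I will invoke the product rule \eqref{hpfc.e.prod}. Expanding formally,
\[
\Ce_t(z)h(z)=\frac{b(1-e^{-tz})}{t}+\int_{0+}^{\infty}\frac{1-e^{-tz}}{t(z+s)}\,\nu(ds).
\]
A direct inverse Laplace computation shows that $\frac{1-e^{-tz}}{t(z+s)}=\mathcal{L}(\tfrac{1}{t}L_s)(z)$, where $L_s(u)=e^{-su}\mathbf{1}_{[0,\infty)}(u)-e^{-s(u-t)}\mathbf{1}_{[t,\infty)}(u)$ has $\|L_s\|_{L^1(\R_+)}=2(1-e^{-st})/s$. Integrating in $\nu$, the measure representing $\Ce_t\cdot h$ has finite total variation (this is where Lemma~\ref{exp} enters below), so $\Ce_t\cdot h\in\Wip(\C_+)$ and
\[
\Ce_t(A)x=(\Ce_t h)(A)y=\frac{b(I-T(t))y}{t}+\frac{1}{t}\int_{0+}^{\infty}\Bigl(\int_0^{\infty}T(u)L_s(u)\,du\Bigr)y\,\nu(ds),
\]
with the inner operator of norm at most $2M(1-e^{-st})/s$.

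The estimate then follows by combining $\|I-T(t)\|\le 1+M$, the pointwise bound just noted, and Lemma~\ref{exp} applied to the complete Bernstein function $h(z)-bz=\int\frac{z\,\nu(ds)}{z+s}$, which yields
\[
\frac{1}{t}\int_{0+}^{\infty}\frac{1-e^{-st}}{s}\,\nu(ds)\le 2\bigl(h(t^{-1})-b/t\bigr).
\]
Putting everything together,
\[
\|\Ce_t(A)x\|\le \frac{(1+M)b}{t}\|y\|+4M\bigl(h(t^{-1})-b/t\bigr)\|y\|=\Bigl[4Mh(t^{-1})-(3M-1)\tfrac{b}{t}\Bigr]\|y\|,
\]
and since $M\ge 1$ the correction is non-positive, giving $\|\Ce_t(A)x\|\le 4Mh(t^{-1})\|y\|=4M\|g(A)x\|/g(t^{-1})$.

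The main obstacle I expect is the rigorous justification that $\Ce_t\cdot h\in\Wip(\C_+)$ so that the product rule applies as the bounded-operator identity $\Ce_t(A)h(A)y=(\Ce_t h)(A)y$; this rests precisely on the finiteness of $\int\frac{1-e^{-st}}{s}\nu(ds)$ guaranteed by Lemma~\ref{exp}. The remaining ingredients—the identity $x=h(A)y$, the Laplace-transform computation of $\|L_s\|_{L^1}$, and the final sharpening using $M\ge 1$—are routine once this is in place.
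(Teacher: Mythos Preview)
Your proof is correct and follows essentially the same route as the paper: pass to the complete Bernstein function $h=1/g$, exploit its Stieltjes representation, obtain the key bound $2M(1-e^{-st})/s$ for the $s$-kernel, and finish with Lemma~\ref{exp}. The only differences are cosmetic---the paper works on the core $\dom(A)$ via \eqref{completeb} and extends by closure rather than verifying $\Ce_t h\in\Wip(\C_+)$ and applying the product rule directly, and it does not explicitly track the linear coefficient $b$ that you handle carefully.
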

\begin{proof}
Remark first that if $g$ a Stieltjes function, then $f=1/g$ is a
complete Bernstein function and by \cite[Theorem 1.2.2, d)]{Haa2006} one has
\begin{equation}\label{inversef}
(f(A))^{-1}=(1/f)(A)=g(A),
\end{equation}
 hence $\dom(g(A))=\ran (f(A)).$

 Let  $y \in \dom(A)\subset \dom (f(A))$ and $t>0.$ Then from \eqref{completeb}  it follows that
\begin{eqnarray*}
t\Ce_t(A) f(A)y
&=&\int_0^\infty \int_0^t T(\tau) A(A+s)^{-1}y\, d\tau\,\mu(ds)
\\
&=&\int_0^\infty [1-T(t)] (A+s)^{-1}y \,\mu(ds).
\end{eqnarray*}
Since
\begin{eqnarray*}
[I-T(t)] (A+s)^{-1}y
&=&\int_t^\infty \left(e^{-s(\tau-t)}
-e^{-s \tau}\right) T(\tau)y\,d\tau+
\int_0^t e^{-s\tau}T(\tau)y\,d\tau,
\end{eqnarray*}
we infer that
\begin{eqnarray*}
\|[1-T(t)) (A+s)^{-1}y\|
\label{coreq2}
&\leq& M\|y\|\left\{\int_t^\infty \left(e^{-s(\tau-t)}
-e^{-s \tau}\right) \,d\tau+
\int_0^t e^{-s\tau}\,d\tau\right\}\\
&=&
 2M\|y\| \cdot
\frac{1-e^{-st}}{s}.
\end{eqnarray*}

Thus using Lemma \ref{exp} we have
\begin{eqnarray*}
\|\Ce_t(A)f(A)y\|&\leq& 2M\|y\|\int_0^\infty
\frac{1-e^{-st}}{ts}\,\mu(ds)
\\
&\le&{4M} f(t^{-1})\|y\|, \qquad y \in \dom(A).
\end{eqnarray*}
Since  $\dom(A)$ is a core for $f(A),$ by passing to closures in the last inequality, we finally obtain
\begin{equation}\label{berform}
\|\Ce_t(A)f(A)y\|\le {4M}{f(t^{-1})}\|y\|, \qquad y \in \dom (f(A)).
\end{equation}
Then
\eqref{inversef} and \eqref{berform} imply
\eqref{rateber}.
\end{proof}

\begin{remark}\label{stcor1}
Note that Theorem \ref{ratebern} can be formulated in terms of
complete Bernstein functions as in \eqref{berform}.
It is this form of \eqref{rateber} that we have obtained in
\cite[Theorem 3.4 and Proposition 4.2]{GHT}.
\end{remark}

The following result, Theorem \ref{stilthh}, is our main  inverse theorem for rates in the
continuous setting. At first glance, its assumptions differ
 from the conclusions of (direct) Theorem \ref{ratebern}.
We show however that the result yields several statements which are
are ``almost'' converse of Theorem \ref{ratebern}. The word
``almost'' is crucial: we prove that the result is optimal and thus there
is an unavoidable, in general,  gap between our direct and inverse mean ergodic
theorems with rates.
\begin{theorem}\label{stilthh}
If $x\in X$ is such that
\begin{equation}
\int_1^\infty \frac{|g'(1/t)|\|\Ce_t(A)x\|}{t^2}\,dt
<\infty,\label{g1}
\end{equation}
then
$x\in \dom(g(A)).$
\end{theorem}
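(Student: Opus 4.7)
The plan is to verify Hirsch's Criterion (Theorem \ref{stilprop1}) by establishing that $\int_\delta^1 (A+s)^{-1} x\,\mu(ds)$ is a norm-Cauchy family in $X$ as $\delta \to 0+$. Equivalently, I will show that $\int_\delta^\epsilon \|(A+s)^{-1} x\|\,\mu(ds) \to 0$ whenever $0 < \delta < \epsilon \to 0+$.

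My starting identity, obtained from $(A+s)^{-1} x = \int_0^\infty e^{-s\tau} T(\tau) x\,d\tau$ and a single integration by parts using $\frac{d}{d\tau}[\tau \Ce_\tau(A)x] = T(\tau)x$, is
\begin{equation*}
(A+s)^{-1} x = s \int_0^\infty \tau\, e^{-s\tau}\, \Ce_\tau(A) x\,d\tau.
\end{equation*}
I split the $\tau$-integral at $\tau = 1$: on $\tau\le 1$ the trivial bound $\|\Ce_\tau(A) x\| \le M\|x\|$ contributes at most $\tfrac{1}{2}Ms\|x\|$, while on the tail $\tau > 1$ the substitution $v = 1/\tau$ yields $s\int_0^1 e^{-s/v}\, \|\Ce_{1/v}(A) x\|\, v^{-3}\,dv$. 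Integrating against $\mu$ on $(\delta,\epsilon)$ and applying Fubini reduces the problem to showing
\begin{equation*}
\int_0^1 \frac{\|\Ce_{1/v}(A) x\|}{v^3}\left(\int_\delta^\epsilon s\, e^{-s/v}\, \mu(ds)\right)dv \longrightarrow 0,
\end{equation*}
the parasite term $\tfrac{M\|x\|}{2}\int_\delta^\epsilon s\,\mu(ds)$ already vanishing thanks to $\int_0^1 \mu(ds)<\infty$.

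The key elementary estimate is the bound $u(1+u)^2 e^{-u} \le K$ for $u \ge 0$, with $K := \sup_{u \ge 0} u(1+u)^2 e^{-u} < \infty$. Applied with $u = s/v$, it gives $s\, e^{-s/v} \le K v^3/(v+s)^2$, whence
\begin{equation*}
\int_\delta^\epsilon s\, e^{-s/v}\, \mu(ds) \le K v^3 \int_\delta^\epsilon \frac{\mu(ds)}{(v+s)^2} \le K v^3\, |g'(v)|,
\end{equation*}
so the outer $v$-integrand is dominated by $K \|\Ce_{1/v}(A) x\|\, |g'(v)|$. The substitution $t = 1/v$ in \eqref{g1} rewrites the hypothesis precisely as $\int_0^1 |g'(v)|\, \|\Ce_{1/v}(A) x\|\,dv < \infty$, so this dominant is integrable. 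Meanwhile, for each fixed $v>0$, $\int_\delta^\epsilon \mu(ds)/(v+s)^2 \le v^{-2}\,\mu((\delta,\epsilon)) \to 0$ since $\mu((0,1))<\infty$. Dominated convergence delivers the required vanishing, and Hirsch's Criterion gives $x \in \dom(g(A))$.

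The main obstacle is pinning down the correct elementary inequality for $s\,e^{-s/v}$: cruder bounds such as $se^{-s/v}\le s$ or $se^{-s/v}\le (e\tau)^{-1}$ leave a surplus factor of $s$, $v$, or $\tau$ in the final estimate and miss the hypothesis integrand $|g'(1/t)|\|\Ce_t(A)x\|/t^2$ by exactly one power. The inequality $u(1+u)^2 e^{-u}\le K$ is tight enough to match the $(v+s)^{-2}$ kernel that defines $|g'(v)|$, which is what forces the proof to close.
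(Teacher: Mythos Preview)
Your proof is correct and follows essentially the same route as the paper: the same integration-by-parts identity $(A+s)^{-1}x = s\int_0^\infty \tau e^{-s\tau}\Ce_\tau(A)x\,d\tau$, the same key elementary inequality $u(1+u)^2 e^{-u}\le K$ (the paper states it as $\tau e^{-\tau}\le 4/(1+\tau)^2$ with the explicit constant $K=4$), and the same appeal to Hirsch's Criterion. The only differences are cosmetic: the paper keeps the variable $t$ instead of substituting $v=1/\tau$, and it directly bounds $\int_{0+}^1 \|(A+s)^{-1}x\|\,\mu(ds)<\infty$ rather than going through a Cauchy/dominated-convergence argument, which makes the write-up a bit shorter but is not a different idea.
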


\begin{proof}
Note that for any $s>0$  and $x\in X$
\begin{eqnarray*}
(A+s)^{-1}x&=&\int_0^\infty e^{-s t} T(t)x\,dt= \int_0^\infty
e^{-s
t} \left(\int_0^t T(\tau)x\,d\tau\right)' \, dt\\
&=&\int_0^\infty s t e^{-st} C_t(A)x\,dt. \label{summation}
\end{eqnarray*}

Therefore
\begin{equation*}
\|(A+s)^{-1}x\|\leq M\|x\|+\int_1^\infty  s t e^{-s
t}\norm{C_t(A)x}\,dt \label{TC}
\end{equation*}
and
\begin{eqnarray*}
\int_{0+}^1 \|(A+s)^{-1}x\|\,\mu(ds)&\leq&
M\|x\|\int_{0+}^1\,\mu(ds) \\
&+&\int_1^\infty \left(\int_{0+}^1   s t e^{-s
t}\,\mu(ds)\right)\,\norm{C_t(A)x}\,dt.
\end{eqnarray*}
To estimate the inner integral observe that
for every $\tau \geq 0:$
\begin{equation}\label{exponn}
\tau e^{-\tau}\,\leq\,\frac{4}{(1+\tau)^2},
\end{equation}
since
\begin{eqnarray*}
4e^\tau -\tau(1+\tau)^2
> 4\left(\sum_{i=0}^{4} \frac{t^i}{i!}\right)
-\tau-2\tau^2-\tau^3
>4+\frac{\tau^3}{6}(\tau-2)>0.
\end{eqnarray*}
Now using \eqref{exponn} with $\tau=ts$ we have
\begin{eqnarray*}
\int_{0+}^\infty ts e^{-ts}\,\mu(ds)\,\leq\,4
\int_{0+}^\infty \frac{\mu(ds)}{(1+ts)^2}=
\frac{4}{t^2}\int_{0+}^\infty
\frac{\mu(ds)}{(1/t+s)^2}
=4\frac{|g'(1/t)|}{t^2}.
\end{eqnarray*}
Thus
\begin{eqnarray}
\label{TC0} \int_{0+}^1 \|(A+s)^{-1}x\|\,\mu(ds) &\leq&
M\|x\|\int_{0+}^1\,\mu(ds) \\
&+&4\int_1^\infty {\frac{|g'(1/t)|}{t^2}\,\norm{C_t(A)x}} \,dt <
\infty,\notag
\end{eqnarray}
and $x \in \dom(g(A))$ by  Hirsch's Theorem \ref{stilprop1}.
\end{proof}

The next direct corollary of Theorem \ref{stilthh} is formulated
in terms of a norm estimate for $\Ce_t(A)$ thus removing
assumptions on the derivative of $g.$
\begin{corollary}\label{qgt}
If $x\in X$ and a measurable function
$\epsilon :(g(1),\infty) \mapsto (0,\infty)$ satisfy
\begin{equation}\label{qucond}
\|\Ce_t(A)x\|=
\rmO\left(\frac{1}{g(1/t)\epsilon(g(1/t))}\right),\quad t\to\infty,
\qquad \int_{g(1)}^\infty \frac{d\tau}{\tau \epsilon(\tau)}\,
<\infty,
\end{equation}
then
$x\in \dom(g(A)).$
\end{corollary}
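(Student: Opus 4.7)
The plan is to deduce the corollary as a direct consequence of Theorem \ref{stilthh}, reducing the hypothesis \eqref{qucond} to the integrability condition \eqref{g1} through a clean change of variable. First I would note that because $g \sim (0,0,\mu)$ is Stieltjes with $g(0+)=\infty$, the derivative
\begin{equation*}
g'(z) = -\int_{0+}^\infty \frac{\mu(ds)}{(z+s)^2}
\end{equation*}
is strictly negative on $(0,\infty)$, so that $g$ is strictly decreasing on $(0,\infty)$ and the map $t \mapsto g(1/t)$ is a $C^1$ diffeomorphism from $(0,\infty)$ onto $(0,\infty)$ which is strictly increasing. In particular it carries $[1,\infty)$ bijectively onto $[g(1),\infty)$, and $d\tau = |g'(1/t)|/t^2\, dt$ where $\tau := g(1/t)$.

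Next I would apply this substitution to the integral in \eqref{g1}. By the hypothesis \eqref{qucond} there exist $C>0$ and $t_0 \ge 1$ with
\begin{equation*}
\|\Ce_t(A)x\| \le \frac{C}{g(1/t)\,\epsilon(g(1/t))}, \qquad t \ge t_0.
\end{equation*}
On $[1,t_0]$ we have $\|\Ce_t(A)x\|\le M\|x\|$ (from boundedness of $(T(t))_{t\ge 0}$), and the change of variable gives
\begin{equation*}
\int_1^{t_0}\frac{|g'(1/t)|}{t^2}\,dt = g(1/t_0) - g(1) < \infty,
\end{equation*}
so the integral in \eqref{g1} over $[1,t_0]$ is finite. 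On $[t_0,\infty)$, again by the change of variable $\tau = g(1/t)$,
\begin{equation*}
\int_{t_0}^\infty \frac{|g'(1/t)|\,\|\Ce_t(A)x\|}{t^2}\,dt \le C \int_{t_0}^\infty \frac{|g'(1/t)|}{t^2\, g(1/t)\,\epsilon(g(1/t))}\,dt = C \int_{g(1/t_0)}^\infty \frac{d\tau}{\tau\,\epsilon(\tau)},
\end{equation*}
and the latter is bounded by $C\int_{g(1)}^\infty d\tau/(\tau\epsilon(\tau)) < \infty$ by the hypothesis. Summing the two contributions yields \eqref{g1}, so Theorem \ref{stilthh} gives $x\in\dom(g(A))$.

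There is not really a hard step here; the only things that require attention are the verification that $g$ is strictly monotone (so the substitution is legal), the handling of the $\rmO$-notation near $t=1$ via the trivial bound $\|\Ce_t(A)x\|\le M\|x\|$, and checking that the resulting integrand is measurable, which follows from the measurability of $\epsilon$. The conceptual core is that the Jacobian of the substitution $\tau = g(1/t)$ exactly matches the weight $|g'(1/t)|/t^2$ appearing in \eqref{g1}, so the Hirsch criterion condition is equivalent to an integrability condition against $d\tau/\tau$ in the variable $\tau = g(1/t)$, with $\epsilon$ providing the needed additional decay.
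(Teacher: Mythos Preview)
Your proof is correct and follows the same route as the paper: you bound the integral in \eqref{g1} by performing the substitution $\tau=g(1/t)$, whose Jacobian is exactly $|g'(1/t)|/t^{2}$, and then invoke Theorem~\ref{stilthh}. The paper's argument is the same one-line change of variable, written more tersely and without separating out the initial segment $[1,t_0]$.
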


\begin{proof}
If
 \eqref{qucond}
 holds, then there exists $c>0$ such that
\[
\int_1^\infty \frac{|g'(1/t)|\|\Ce_t(A)x\|}{t^2}\,dt \le c\int_1^\infty
\frac{d g(1/t)}{g(1/t)\epsilon(g(1/t))} =c\int_{g(1)}^\infty
\frac{d\tau}{\tau \epsilon(\tau)}<\infty,
\]
and Theorem \ref{stilthh} implies $x\in \dom(g(A))$.
\end{proof}

Now we derive a corollary of Theorem \ref{stilthh} which is almost converse to  Theorem \ref{ratebern}.
It is however strictly weaker than Theorem \ref{stilthh} and at the same time it cannot
essentially be improved as we will show in Section \ref{optimality}.
\begin{corollary}\label{corfirst}
If $x\in X$ is such that
\begin{equation}
\int_1^\infty \frac{g(1/t)\|\Ce_t(A)x\|}{t} dt<\infty,
\label{first}
\end{equation}
then $x\in \dom(g(A))$.
\end{corollary}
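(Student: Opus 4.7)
The plan is to reduce this immediately to Theorem \ref{stilthh}: it suffices to show that hypothesis \eqref{first} implies the integrability condition \eqref{g1}. So I want a pointwise inequality
\[
\frac{|g'(1/t)|}{t^2} \,\le\, C \,\frac{g(1/t)}{t}, \qquad t > 0,
\]
for some absolute constant $C$ (hopefully $C=1$), which would then give
\[
\int_1^\infty \frac{|g'(1/t)|\|\Ce_t(A)x\|}{t^2}\,dt \,\le\, C\int_1^\infty \frac{g(1/t)\|\Ce_t(A)x\|}{t}\,dt \,<\,\infty,
\]
and Theorem \ref{stilthh} would yield $x\in\dom(g(A))$.

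To establish the pointwise bound I would expand both sides using the Stieltjes representation $g\sim(0,0,\mu)$. After the change of variable inside the integrals one finds
\[
\frac{g(1/t)}{t} = \int_{0+}^\infty \frac{\mu(ds)}{1+ts}, \qquad \frac{|g'(1/t)|}{t^2} = \int_{0+}^\infty \frac{\mu(ds)}{(1+ts)^2},
\]
since $g'(z) = -\int_{0+}^\infty (z+s)^{-2}\mu(ds)$. The desired inequality is then a triviality from the elementary fact that $(1+\tau)^{-2}\le(1+\tau)^{-1}$ for $\tau\ge 0$, and moreover holds with constant $C=1$.

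There is no real obstacle here: the only subtle point is to be careful that $g'$ is legitimately differentiated under the integral sign (which is standard since $\mu$ is a positive Radon measure on $(0,\infty)$ with $\int_{0+}^\infty (1+s)^{-1}\mu(ds)<\infty$, and the integrand $(1/t+s)^{-2}$ is locally bounded in $s$ uniformly for $t$ in compact subsets of $(0,\infty)$). Once the pointwise bound is in hand, the corollary follows in one line from Theorem \ref{stilthh}. This is also consistent with the canonical example $g(z)=z^{-\gamma}$, $\gamma\in(0,1)$, where $|g'(1/t)|/t^2 = \gamma\cdot g(1/t)/t$, showing that the inequality is sharp in form.
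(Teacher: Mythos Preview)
Your proof is correct and is essentially identical to the paper's argument: both reduce to Theorem \ref{stilthh} via the elementary pointwise inequality $|g'(\tau)|\le g(\tau)/\tau$ (equivalently, your $|g'(1/t)|/t^2\le g(1/t)/t$), obtained directly from the Stieltjes representation by comparing $(\tau+s)^{-2}$ with $\tau^{-1}(\tau+s)^{-1}$. The only difference is cosmetic---you carry out the substitution $\tau=1/t$ before writing the inequality, whereas the paper states it in the variable $\tau$.
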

\begin{proof}
It suffices to observe that
\begin{equation}\label{rem3}
|g{'}(\tau)|=\int_{0+}^\infty \frac{\mu(ds)}{(\tau+s)^2}\le
\frac{1}{\tau}\int_{0+}^\infty
\frac{\mu(ds)}{\tau+s}=\frac{g(\tau)}{\tau},\qquad \tau>0.
\end{equation}
(In fact, a more general estimate is given in
\cite[Lemma 3.9.34]{Jacob}.) The claim follows now from Theorem
\ref{stilthh}.
\end{proof}
\begin{remark}\label{remsup}
 Note that Corollary \ref{corfirst} can be formulated in terms of the norm estimates for $\Ce_t(A)x$  rather than
an integral condition on $\|\Ce_t(A)x\|.$ Indeed, \eqref{first} is
equivalent to the condition
\begin{equation}
\|\Ce_t(A)x\|= \rmO\left(\frac{1}{\epsilon(t) g(1/t)}\right),\qquad t\to \infty,
\label{qucond1}
\end{equation}
where $\epsilon:(1,\infty)\to (0,\infty)$ is a  measurable  function satisfying
\[
\int_1^\infty \frac{d\tau}{\tau\epsilon(\tau)} <\infty.
\]
\end{remark}

Observe that if $g(z)=z^{-\alpha}, \alpha \in (0,1),$ then
\eqref{qucond} and \eqref{qucond1} are, in a sense, equivalent.
Indeed, if \eqref{qucond} holds then setting $\tilde
{\epsilon}(\tau)=\epsilon(\tau^\alpha)$ we have
$$
\int_{1}^{\infty}\frac {d\tau}{\tau \tilde\epsilon(\tau)}
=\frac{1}{\alpha}\int_{1}^{\infty}\frac{d\tau}{\tau \epsilon(\tau)},
$$
and \eqref{qucond1} is satisfied with $\epsilon$ replaced
by $\tilde{\epsilon}.$ Conversely,  if $\eqref{qucond1}$ holds
then setting $\tilde {\epsilon}(\tau)=\epsilon(\tau^{1/\alpha})$
we infer that \eqref{qucond1} is true with $\epsilon$ replaced by
$\tilde{\epsilon}.$

Using Remark \ref{remsup} we  state now the following straightforward  consequence of Corollary \ref{corfirst}.
\begin{corollary}\label{stcor2}
If for $x \in X$ there exists $\alpha\in (0,1)$ such that
\begin{equation}\label{glog}
\|C_t(A)x\| =\rmO\left(\frac{1}{{g(1/t)\log^{1+\alpha}(2+g(1/t)})}\right),\qquad t\to\infty,
\end{equation}
then $x\in \dom (g(A))$.
\end{corollary}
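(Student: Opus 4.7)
The plan is to reduce the statement directly to Corollary \ref{qgt} via the concrete choice
\[
\epsilon(\tau) := \log^{1+\alpha}(2+\tau), \qquad \tau \in (g(1),\infty).
\]

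With this $\epsilon$, the decay bound $\|\Ce_t(A)x\| = \rmO(1/(g(1/t)\epsilon(g(1/t))))$ in \eqref{qucond} becomes exactly the hypothesis \eqref{glog}, so the first half of \eqref{qucond} holds for free. It then remains to verify the integrability requirement
\[
\int_{g(1)}^\infty \frac{d\tau}{\tau\,\log^{1+\alpha}(2+\tau)} < \infty.
\]
This is a standard one-variable fact: for large $\tau$ the integrand is comparable to $1/(\tau\,\log^{1+\alpha}\tau)$, and the substitution $u = \log\tau$ turns the tail into $\int^\infty u^{-(1+\alpha)}\,du$, which converges precisely because $\alpha > 0$; near the positive, finite left endpoint $g(1)$ the integrand is bounded, so no issue arises there. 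Having verified both halves of \eqref{qucond}, Corollary \ref{qgt} delivers $x \in \dom(g(A))$.

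I expect essentially no obstacle: the corollary is a clean specialization of Corollary \ref{qgt} to an explicit Bertrand-type weight, and the only minor technicality is that the constant $2$ inside the logarithm serves only to ensure positivity of $\epsilon$ regardless of the value of $g(1)$. (Recall $g(1) > 0$ since $g \sim (0,0,\mu)$ is strictly positive on $(0,\infty)$.) In particular, no further use of the functional calculus or of Hirsch's criterion is needed beyond what has already been absorbed into Corollary \ref{qgt}.
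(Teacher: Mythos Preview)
Your proof is correct: setting $\epsilon(\tau)=\log^{1+\alpha}(2+\tau)$ in Corollary~\ref{qgt} reproduces hypothesis~\eqref{glog} verbatim, and the Bertrand integral $\int_{g(1)}^\infty d\tau/(\tau\log^{1+\alpha}(2+\tau))$ converges for $\alpha>0$.

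The paper, by contrast, attributes the result to Corollary~\ref{corfirst} via Remark~\ref{remsup}, i.e., to finding a function $\epsilon$ of $t$ (rather than of $g(1/t)$) with $\int_1^\infty dt/(t\,\epsilon(t))<\infty$. Your route through Corollary~\ref{qgt} is actually the cleaner and more robust one: with the only natural choice $\epsilon(t)=\log^{1+\alpha}(2+g(1/t))$, the integral $\int_1^\infty dt/(t\,\epsilon(t))$ need not converge for every admissible $g$ --- take for instance $g(z)=\log(1+1/z)$, so that $g(1/t)\sim\log t$ and the integrand behaves like $1/\bigl(t(\log\log t)^{1+\alpha}\bigr)$, which is not integrable. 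Corollary~\ref{qgt} rests on Theorem~\ref{stilthh} rather than on the weaker Corollary~\ref{corfirst}; it has already absorbed the change of variables $\tau=g(1/t)$ and therefore sidesteps this issue entirely. So your choice of lemma is not just equivalent but in fact the correct one for the statement as written.
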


Specifying \eqref{glog} for a power function we obtain the domain/range
condition for fractional powers of $A$.
\begin{corollary}\label{stcor3}
If for $x \in X$ there exist $\alpha,\beta\in (0,1)$ such that
\[
\|C_t(A)x\|=\rmO\left(\frac{1}{t^\beta\log^{1+\alpha}t}\right),\qquad  t\to\infty,
\]
then  $x\in \dom (A^{-\beta})=\ran (A^\beta)$.
\end{corollary}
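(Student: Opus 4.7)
The plan is to obtain Corollary \ref{stcor3} as an essentially immediate specialization of Corollary \ref{stcor2} to the Stieltjes function $g(z)=z^{-\beta}$, so the only real work is in verifying that the asymptotic hypothesis on $\|C_t(A)x\|$ transports correctly under this choice.

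First, I would invoke Example \ref{contex}(a), which tells us that for $\beta\in(0,1)$ the function $g(z)=z^{-\beta}$ is a Stieltjes function with Stieltjes representation of the form $(0,0,\mu)$ (explicitly, $\mu(ds)=\tfrac{\sin\pi\beta}{\pi}\,s^{-\beta}\,ds$) and satisfies $g(0+)=\infty$. Hence $g$ lies in the class of functions that Corollary \ref{stcor2} applies to. Moreover, since $1/g(z)=z^\beta$ is a complete Bernstein function, the identity $(f(A))^{-1}=(1/f)(A)$ used in the proof of Theorem \ref{ratebern} (cf.\ \eqref{inversef}) yields $g(A)=A^{-\beta}=(A^\beta)^{-1}$, so that $\dom(g(A))=\ran(A^\beta)=\dom(A^{-\beta})$, matching the conclusion we want.

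Next I would verify the hypothesis of Corollary \ref{stcor2} for this $g$. With $g(1/t)=t^\beta$, the required growth condition is
\[
\|C_t(A)x\|=\rmO\!\left(\frac{1}{t^\beta\log^{1+\alpha}(2+t^\beta)}\right),\qquad t\to\infty.
\]
Since $\log(2+t^\beta)=\beta\log t+\rmO(1)$ as $t\to\infty$, there exist constants $0<c_1\le c_2<\infty$ such that $c_1\log t\le \log(2+t^\beta)\le c_2\log t$ for all $t\ge 2$. Consequently the hypothesis $\|C_t(A)x\|=\rmO(t^{-\beta}\log^{-(1+\alpha)}t)$ of Corollary \ref{stcor3} is equivalent to the hypothesis of Corollary \ref{stcor2} for $g(z)=z^{-\beta}$ (with possibly an adjusted implicit constant in the big-O).

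Applying Corollary \ref{stcor2} then gives $x\in \dom(g(A))=\dom(A^{-\beta})=\ran(A^\beta)$, which is exactly the claim. The only point that requires any attention at all is the logarithmic comparison in the previous paragraph; everything else is a direct substitution, so I do not anticipate any real obstacle.
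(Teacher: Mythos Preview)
Your proposal is correct and follows exactly the approach indicated in the paper: the corollary is stated there as an immediate specialization of Corollary~\ref{stcor2} to the power function $g(z)=z^{-\beta}$, and your only additional work---checking that $\log(2+t^\beta)$ is comparable to $\log t$ as $t\to\infty$---is the natural verification the paper leaves implicit.
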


The next result proposes a different ideology for proving the inverse mean ergodic theorems with  rates.  To be able to place
an element $x$ into  $\dom (g(A))$ in the results above we had to add
an ``extra rate'' to the rate $r(t)=(g(1/t))^{-1}$ of the decay of $\Ce_t(A)x$
obtained in Theorem \ref{ratebern}.
 Now, instead of adding an ``extra rate'',
we add an ``extra domain'' to $\dom (g(A))$. In this way, we
will show that $x$ belongs to a slightly larger space than $\dom(g(A))$ under the assumption $\norm{\Ce_t(A)x}=\rmO((g(1/t))^{-1})$. (Recall that
by Theorem \ref{ratebern} $\norm{\Ce_t(A)x}=\rmO((g(1/t))^{-1})$ would follow from merely $x \in \dom (g(A)).$)
To this aim, recall first (from Section $2$) that for any complete
Bernstein function $f$ and Stieltjes function $g$ the function $f
\circ g$ is a Stieltjes. Moreover, by \eqref{inclusion}, we have
$$\dom ((f\circ g)(A)) \supset \dom (g(A)),$$
and the inclusion is in general strict. While the
assumption $\norm{\Ce_t(A)x}=\rmO((g(1/t))^{-1})$ does not imply
$x \in \dom (g(A))$ we prove that it does suffices to guarantee $x \in  \dom ((q\circ
g)(A))$ for a large class of Bernstein functions $q.$

\begin{theorem}\label{stcor4}
Suppose $q$ is a complete Bernstein function such that
\begin{equation}
\int_1^\infty\frac{q(\tau)}{\tau^2}\,d\tau<\infty, \qquad
\lim_{s\to 0+}\,q(s)=0. \label{stilcor8}
\end{equation}
If $x \in X$  satisfies
\begin{equation}
\label{stil08}
\|\Ce_t(A)x\|=\rmO\left(\frac{1}{g(1/t)}\right),\qquad t\to\infty,
\end{equation}
then $ x\in \dom ((q \circ g)(A)). $ In particular, if
\eqref{stil08} holds, then for any $\alpha\in (0,1)$ one has $
x\in \dom ([g^\alpha](A)). $
\end{theorem}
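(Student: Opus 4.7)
The plan is to apply Theorem~\ref{stilthh} to the Stieltjes function $f := q \circ g$ in place of $g$. First, $f$ is Stieltjes by \cite[Theorem 7.5]{SchilSonVon2010}, and the Lemma immediately preceding Theorem~\ref{stilprop1} gives $f(A) = q(g(A)) = (q\circ g)(A)$ in the extended HP-calculus. If $q(\infty) < \infty$, then $f$ is bounded and $f(A)$ is a bounded operator (since $\|(A+s)^{-1}\| \le M/s$ and $\int s^{-1}\tilde{\mu}(ds) = f(0+) < \infty$), so $\dom(f(A)) = X$ and the conclusion is trivial. Hence I may assume $q(\infty) = \infty$. Since $q(0+)=0$ and $g \sim (0,0,\mu)$ with $g(0+)=\infty$, a direct check shows $f \sim (0,0,\tilde{\mu})$ with $f(0+) = q(\infty) = \infty$, so $f$ meets the standing hypotheses of Section~\ref{section3}, and Theorem~\ref{stilthh} applies.

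The problem thus reduces to verifying
\begin{equation*}
\int_1^\infty \frac{|f'(1/t)|}{t^2}\,\|\Ce_t(A)x\|\,dt < \infty.
\end{equation*}
By the rate assumption \eqref{stil08} it is enough to bound $\int_1^\infty |f'(1/t)|/(t^2 g(1/t))\,dt$. The key step is the substitution $u = g(1/t)$: since $g$ is a strictly decreasing homeomorphism of $(0,\infty)$ onto itself and $f'(s) = q'(g(s))\,g'(s)$, I obtain $|f'(1/t)|/t^2\,dt = q'(g(1/t))\,|g'(1/t)|/t^2\,dt = q'(u)\,du$, and hence
\begin{equation*}
\int_1^\infty \frac{|f'(1/t)|}{t^2\, g(1/t)}\,dt = \int_{g(1)}^\infty \frac{q'(u)}{u}\,du.
\end{equation*}

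The finiteness of the right-hand integral is an integration by parts:
\begin{equation*}
\int_{g(1)}^R \frac{q'(u)}{u}\,du = \frac{q(R)}{R} - \frac{q(g(1))}{g(1)} + \int_{g(1)}^R \frac{q(u)}{u^2}\,du,
\end{equation*}
and letting $R \to \infty$ the right-hand side converges: $\int_{g(1)}^\infty q(u)/u^2\,du < \infty$ by \eqref{stilcor8}, while $q(R)/R$ stays bounded because $u \mapsto q(u)/u$ is decreasing for the concave Bernstein function $q$ with $q(0+)=0$. This yields $x \in \dom((q\circ g)(A))$. For the ``in particular'' assertion I would take $q(z) = z^\alpha$ with $\alpha \in (0,1)$, which is complete Bernstein by Example~\ref{contex}(a) and trivially satisfies $q(0+)=0$ and $\int_1^\infty \tau^{\alpha-2}\,d\tau < \infty$; then $(q \circ g)(z) = g^\alpha(z)$.

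The main obstacle I anticipate is two-fold: confirming that $f = q \circ g$ fits the framework of Section~\ref{section3} in order to legitimately invoke Theorem~\ref{stilthh} (which I handle above by splitting off the trivial case $q(\infty)<\infty$), and bookkeeping the change of variable together with the integration by parts in a way that reveals the clean reduction to the hypothesis \eqref{stilcor8}. Everything else is a direct application of tools already assembled in Section~\ref{section3}.
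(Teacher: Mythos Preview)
Your proof is correct and follows essentially the same route as the paper. The only cosmetic differences are: you invoke Theorem~\ref{stilthh} as a black box while the paper re-applies its intermediate estimate \eqref{TC0} together with the Hirsch criterion directly; and you dispose of the case $q(\infty)<\infty$ separately, whereas the paper verifies $\lim_{s\to 0+} s\,q(g(s))=0$ via an explicit linear bound $g(1/t)\le d(\mu)\,t$. The core computation---the change of variable $u=g(1/t)$ turning the integral into $\int_{g(1)}^\infty q'(u)/u\,du$, followed by integration by parts against the hypothesis $\int_1^\infty q(\tau)/\tau^2\,d\tau<\infty$---is identical in both arguments.
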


\begin{proof}
Since $\lim_{s \to \infty} g(s)=0,$ from our assumption on $q$ it
follows that
\begin{equation}\label{li}
\lim_{s\to \infty}\, q(g(s))=0.
\end{equation}
Furthermore $q(t)/t$ is a Stieltjes function, hence it
decreases on  $(0,+\infty)$ and $\lim_{t \to \infty} q(t)/t$
exists and finite. By \eqref{stilcor8},
we have
$
\lim_{t \to \infty}{q(t)}/{t}=0.
$
Moreover,
\begin{equation*}
g(1/t)=t\int_{0+}^\infty \frac{\mu(ds)}{1+ts}
\le t\int_{0+}^\infty \frac{\mu(ds)}{1+s},\qquad t\ge 1.
\end{equation*}
Thus since $q$ is increasing we obtain
\begin{equation}\label{inn}
q(g(1/t))\le q(d(\mu)t), \qquad t\ge 1, \quad d(\mu):=
\int_{0+}^\infty \frac{\mu(ds)}{1+s}.
\end{equation}
Therefore,
\begin{equation}\label{li1}
\lim_{s\to 0+}\,s q(g(s))\leq \lim_{t\to \infty
}\frac{q(d(\mu)t)}{t}=0.
\end{equation}
Using \eqref{li} and \eqref{li1} we infer that  the Stieltjes
function $q \circ g$
 has the representation $(0,0,\nu).$

Next we apply Theorem \ref{stilprop1} to $x$ and $(q \circ g)(A).$
Using the hypothesis on the decay of $\|\Ce_t(A)x\|$ and  the estimate
\eqref{TC0} from the proof of Theorem \ref{stilthh}, we obtain
\begin{eqnarray*}
\int_{0+}^1 \|(A+s)^{-1}x\|\,\nu(ds)
&\leq& M\|x\|\int_{0+}^1 \nu(ds)\\
&+& 4c\int_1^\infty\frac{1}{g(1/t)} (q(g(1/t))' \, dt,
\end{eqnarray*}
for some $c >0.$
Furthermore
\begin{eqnarray*}
\int_1^\infty\frac{1}{q(1/t)}(q(g(1/t))'\,dt&=&
\int_1^\infty\frac{q{'}(g(1/t))}{g(1/t)} d g(1/t)
=\int_{\tau_0}^\infty\frac{q{'}(\tau)}{\tau}\,d\tau\\
&=&-\frac{q(g(1))}{g(1)}+
\int_{\tau_0}^\infty \frac{q(\tau)}{\tau^2}\,d\tau
\leq \int_{g(1)}^\infty
\frac{q(\tau)}{\tau^2}\,d\tau.
\end{eqnarray*}
Thus finally
\[
\int_{0+}^1 \|(A+s)^{-1}x\|\,\nu(ds)\leq M\|x\|\int_{0+}^1
\nu(ds)+4c\int_{g(1)}^\infty \frac{q(\tau)}{\tau^2}\,d\tau
<\infty,
\]
and then $ x\in \dom \,((q \circ g)(A)). $ The last statement
follows from the fact that $z^{\alpha}, \alpha \in (0,1),$ is a
complete Bernstein function satisfying \eqref{stilcor8}.
\end{proof}

 Note that if $g \sim (0,0,\mu)$ is a Stieltjes  function
 then
\begin{equation}
g(t)=\int_0^\infty e^{-t s} m(s)\,ds,\quad
m(s)=\int_0^\infty e^{-s \tau}\,\mu(d\tau),\quad t,s>0,
\label{Ap11}
\end{equation}
where $m$ is integrable in the neighborhood of zero and $\lim_{t \to \infty} m(t)=0.$

Using \eqref{Ap11} and our direct mean ergodic theorems with rates, we prove below a
characterization of $\dom \, (g(A))$ in terms of $\Ce_t(A)$ which complements
\cite[Corollaire, p. 214-215]{HirschFA}. It involves certain means of $\Ce_t(A)$ thus avoiding a need of adding ``extra rate'' or ``extra range''. It is however less explicit than theorems above.
\begin{proposition}\label{chardommean}
An element
 $x$ belongs to $\dom(g(A))$ if and only if
\begin{itemize}
\item [{\it (i)}] $\displaystyle{ \lim_{t\to \infty}t m(t) \Ce_t(A)x=0};$
\item [{\it (ii)}] $\displaystyle{\lim_{t\to\infty}\,\int_0^t s m{'}(s) \Ce_s(A)x\,ds}\,\,$
exists.
\end{itemize}
\end{proposition}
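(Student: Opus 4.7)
The cornerstone is the integration-by-parts identity
\[
\Phi(t) := \int_0^t m(s) T(s) x \, ds = t m(t) \Ce_t(A) x - \int_0^t s m'(s) \Ce_s(A) x \, ds, \qquad t > 0,
\]
obtained using the primitive $\int_0^s T(\sigma) x \, d\sigma = s \Ce_s(A) x$; the boundary contribution at $0$ vanishes since $\|s\Ce_s(A) x\| \le Ms\|x\|$ dominates any mild blow-up of $m(s)$ as $s \to 0+$. It follows that conditions \emph{(i)} and \emph{(ii)} together are equivalent to the conjunction: \emph{(i)} holds and $\Phi(t)$ converges in $X$ as $t\to\infty$. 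The proposition therefore reduces to the equivalence: $x \in \dom(g(A))$ if and only if $\lim_t t m(t) \Ce_t(A) x = 0$ and $\Phi(t)$ converges in $X$, with the limit then equal to $g(A)x$.

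To link $\Phi(t)$ with Hirsch's Criterion (Theorem~\ref{stilprop1}), I would substitute $m(s) = \int_{0+}^\infty e^{-s\tau}\mu(d\tau)$ into $\Phi(t)$ and apply Fubini to write $\Phi(t) = \int_{0+}^\infty p_t(\tau)\,\mu(d\tau)$ with $p_t(\tau) = (A+\tau)^{-1}x - e^{-t\tau} T(t)(A+\tau)^{-1}x$. A parallel computation, starting from the Hirsch quantity $y_t := \int_{1/t}^\infty (A+\tau)^{-1} x\,\mu(d\tau)$ and integrating by parts in the auxiliary variable of $(A+\tau)^{-1}x = \int_0^\infty e^{-\tau\sigma}T(\sigma)x\,d\sigma$, yields the clean identity
\[
y_t = \int_0^\infty s\, n_t(s)\, \Ce_s(A) x\, ds, \qquad n_t(s) := \int_{1/t}^\infty \tau e^{-s\tau}\,\mu(d\tau),
\]
with $n_t(s) \to -m'(s)$ pointwise as $t\to\infty$. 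For direction $(\Leftarrow)$, splitting $n_t = -m' - (n_t + m')$, applying \emph{(ii)} to the principal piece, and using \emph{(i)} to dispose of the remainder shows that $\lim_t y_t$ exists; Hirsch's Criterion then places $x \in \dom(g(A))$ with $g(A)x = \lim_t \Phi(t)$. For $(\Rightarrow)$, Hirsch supplies $y_t \to g(A)x$, and controlling the difference $\Phi(t) - y_t$ via a tail/head split of the Fubini representation (with \emph{(i)} absorbing the small-$\tau$ contribution) identifies $\lim_t\Phi(t) = g(A)x$. Condition \emph{(i)} itself is extracted by a density argument: writing $x = (1/g)(A) y$ with $y = g(A) x$ and approximating $y$ by $y_n \in \dom(A)$, one has $\|\Ce_t(A)(1/g)(A) y_n\| = O(1/t)$ via the Phillips representation of Theorem~\ref{hpfc.c.bf-fc} (which places $(1/g)(A) y_n$ in $\ran(A)$), while Theorem~\ref{ratebern} applied to the complete Bernstein function $1/g$ gives $\|\Ce_t(A)(1/g)(A)(y - y_n)\| = O(\|y - y_n\|/g(1/t))$; the elementary bound $t m(t)/g(1/t) = O(1)$ (using that $m$ is non-increasing) then yields \emph{(i)} upon sending $n \to \infty$.

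\textbf{Main obstacle.} The delicate step is the passage to the limit in the $y_t$-formula. Although $n_t(s) \to -m'(s)$ pointwise, this convergence \emph{cannot} be upgraded to dominated convergence: the measure $\mu$ may accumulate mass arbitrarily near $0$ while satisfying only $\int \mu/(1+\tau) < \infty$, so the excess $n_t + m' = -\int_0^{1/t}\tau e^{-s\tau}\mu(d\tau)$ resists unconditional control. Hypothesis \emph{(i)} is precisely the compensating ingredient, encoding the vanishing of the ``$s=t$'' boundary term in the integration by parts that carries the $\mu$-mass at the small scales $\tau \sim 1/t$.
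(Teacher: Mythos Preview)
Your integration-by-parts identity $\Phi(t) = tm(t)\Ce_t(A)x - \int_0^t sm'(s)\Ce_s(A)x\,ds$ is exactly the paper's identity \eqref{ApN0}, and the logical reduction to ``(i) holds and $\Phi(t)$ converges'' is the right start. But from there the paper takes a much shorter route that you missed.

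The paper does \emph{not} go through the resolvent form of Hirsch's Criterion (Theorem~\ref{stilprop1}). It invokes a different result from the same Hirsch paper, namely \cite[Corollaire, p.~214--215]{HirschFA}, which states directly that
\[
x\in\dom(g(A)) \quad\Longleftrightarrow\quad \lim_{t\to\infty}\int_0^t m(s)T(s)x\,ds \text{ exists.}
\]
With this equivalence in hand, the identity makes $(\Leftarrow)$ immediate (no remainder analysis, no $y_t$, no Fubini), and for $(\Rightarrow)$ one only needs to extract (i). For that, the paper uses the elementary bound $tm(t)\le g(1/t)$ (from $e^{-s}\le 1/(1+s)$, your ``$m$ non-increasing'' justification is not the right one) together with the little-o companion of Theorem~\ref{ratebern}, cited from \cite{GHT1}: for $x\in\dom(g(A))$ one has $g(1/t)\|\Ce_t(A)x\|\to 0$. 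Then (ii) follows from (i) and convergence of $\Phi(t)$.

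Your attempt to bypass the Corollaire by linking $\Phi(t)$ to $y_t=\int_{1/t}^\infty (A+\tau)^{-1}x\,\mu(d\tau)$ is essentially an attempt to \emph{reprove} that Corollaire, and the gap you flag as the ``main obstacle'' is real and not resolved by your sketch. Writing $n_t=-m'+(n_t+m')$ and ``applying (ii) to the principal piece'' is illegitimate: the two pieces do not separately converge as Lebesgue integrals, since $\int_0^\infty s|n_t(s)+m'(s)|\,\|\Ce_s(A)x\|\,ds$ is bounded above only by $M\|x\|\int_{0+}^{1/t}\mu(d\tau)/\tau$, which is infinite for every $t$ under the standing assumption $g(0+)=\infty$. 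Hypothesis (i) does not obviously repair this; a correct argument here would amount to reproducing Hirsch's original proof.

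Your density argument for (i) also has a gap. The claim that $(1/g)(A)y_n\in\ran(A)$ for $y_n\in\dom(A)$ does not follow from the Phillips representation: with $f=1/g\sim(0,b,\mu_L)$ one has $f(A)y_n = bAy_n + \int(I-T(s))y_n\,\mu_L(ds)$, and although each $(I-T(s))y_n = A\int_0^s T(\sigma)y_n\,d\sigma$ lies in $\ran(A)$, the integral $\int_{0+}^\infty \int_0^s T(\sigma)y_n\,d\sigma\,\mu_L(ds)$ typically diverges (check $f(z)=\sqrt{z}$), so $A$ cannot be pulled outside. The fix is simple and already in the paper's toolkit: approximate $x$ directly by $x_n\in\ran(A)$, which is a core for $g(A)$ by Theorem~\ref{resolventbern}(ii). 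Then $\|\Ce_t(A)x_n\|=O(1/t)$ gives $tm(t)\|\Ce_t(A)x_n\|=O(m(t))\to 0$, while Theorem~\ref{ratebern} and $tm(t)\le g(1/t)$ control $tm(t)\|\Ce_t(A)(x-x_n)\|\le 4M\|g(A)(x-x_n)\|$.
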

\begin{proof}
By  \cite[Corollaire, p. 214-215]{HirschFA}
 $x\in \dom(g(A))$ if and only if
\begin{equation}
\lim_{t\to\infty}\,\int_0^t m(s) T(s)x\,ds \quad  \text{exists}.
\label{ApL10}
\end{equation}
Since for any $x\in X$ and $t>0$
\begin{equation}
\int_0^t m(s) T(s)x\,ds=t m(t) \Ce_t(A)x- \int_0^t s
m{'}(s)\Ce_s(A)x\,ds, \label{ApN0}
\end{equation}
it suffices to show that \eqref{ApL10} implies {\it (i)}
and {\it (ii).}
Using \eqref{Ap11}
and $e^{-s}\le 1/(1+s)$, $s\ge 0,$
we infer that
\begin{equation}
s m(s)\le g(1/s),\quad s>0.
\label{Apin}
\end{equation}
Since by assumption $\ran(A)$ is dense,
\cite[Theorem 3.4 and Proposition 4.2]{GHT1} imply that
$g(1/t)\Ce_t(A)x \to 0, t \to \infty,$ and then
 \eqref{Apin} implies {\it (i)}.
If {\it (i)} holds, then from \eqref{ApN0} it follows that {\it (ii)} holds as well.
\end{proof}

If  $g(z)=z^{-\alpha}$, $\alpha\in (0,1)$,
then we can derive a slightly stronger result which is a continuous counterpart of
\cite[Lemma 4.1]{Cu10}.

\begin{theorem}\label{thmcrit}
If $\alpha\in (0,1)$ then  $x\in \dom(A^{-\alpha})$ if and only if
\begin{equation}
\lim_{t\to\infty}\,\int_0^t  s^{\alpha-1} \Ce_s(A)x\,ds \quad
\text{exists}. \label{ApL2}
\end{equation}
\end{theorem}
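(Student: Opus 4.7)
The plan is to invoke Hirsch's criterion (Theorem \ref{stilprop1}) for the Stieltjes function $g(z)=z^{-\alpha}$, whose representing measure is $\mu(\diff s)=\tfrac{\sin\pi\alpha}{\pi}s^{-\alpha}\diff s$. The ``only if'' direction is immediate from Proposition \ref{chardommean}: here $m(s)=c_\alpha s^{\alpha-1}$ with $c_\alpha=1/\Gamma(\alpha)$, so $sm'(s)=c_\alpha(\alpha-1)s^{\alpha-1}$, and condition (ii) of that proposition reduces, up to a nonzero constant, to the convergence of $\int_0^t s^{\alpha-1}\Ce_s(A)x\,\diff s$. The whole work is in the converse: assuming $v(t):=\int_0^t s^{\alpha-1}\Ce_s(A)x\,\diff s$ converges, I must show that the strong limit $\lim_{\delta\to 0+}F(\delta)$ exists for $F(\delta):=\int_\delta^1 s^{-\alpha}(A+s)^{-1}x\,\diff s$.

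First I would rewrite the resolvent by integrating by parts in $(A+s)^{-1}x=\int_0^\infty e^{-s\tau}T(\tau)x\,\diff\tau$, using $\tau\Ce_\tau(A)x=\int_0^\tau T(r)x\,\diff r$, to obtain $(A+s)^{-1}x = s\int_0^\infty \tau e^{-s\tau}\Ce_\tau(A)x\,\diff\tau$. Applying Fubini (justified by an absolute bound for $\delta>0$) and substituting $r=s\tau$ in the inner integral transforms $F(\delta)$ into
\[
F(\delta) = \int_0^\infty \tau^{\alpha-1}\Ce_\tau(A)x\,h(\delta,\tau)\,\diff\tau, \qquad h(\delta,\tau):=\int_{\delta\tau}^\tau r^{1-\alpha}e^{-r}\,\diff r.
\]
As $\delta\to 0+$ the kernel $h(\delta,\tau)$ increases pointwise to $h(0,\tau)=\int_0^\tau r^{1-\alpha}e^{-r}\diff r\le \Gamma(2-\alpha)$, and the key properties of the weight $h(\delta,\cdot)$ are that it is pointwise monotone in $\delta$ and has $\delta$-uniform total mass.

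Next I would show $\{F(\delta)\}_{\delta>0}$ is Cauchy by the standard split $\int_0^\infty=\int_0^N+\int_N^\infty$. On $[0,N]$ the integrand is dominated by $M\|x\|\Gamma(2-\alpha)\tau^{\alpha-1}$ (integrable since $\alpha>0$), and $|h(0,\tau)-h(\delta,\tau)|\le (\delta N)^{2-\alpha}/(2-\alpha)$ for $\tau\le N$, so this piece is handled by dominated convergence. For the tail I would integrate by parts with $G(\tau):=\int_N^\tau r^{\alpha-1}\Ce_r(A)x\,\diff r$; the boundary terms vanish ($G(N)=0$ and $h(\delta,\infty)=0$), giving
\[
\int_N^\infty \tau^{\alpha-1}\Ce_\tau(A)x\,h(\delta,\tau)\,\diff\tau = -\int_N^\infty G(\tau)\,\partial_\tau h(\delta,\tau)\,\diff\tau.
\]
Since $v$ converges, $\eta(N):=\sup_{\tau\ge N}\|G(\tau)\|\to 0$ as $N\to\infty$, while a direct computation $\partial_\tau h(\delta,\tau)=\tau^{1-\alpha}e^{-\tau}-\delta^{2-\alpha}\tau^{1-\alpha}e^{-\delta\tau}$ combined with the substitution $r=\delta\tau$ yields $\int_N^\infty|\partial_\tau h(\delta,\tau)|\,\diff\tau\le 2\Gamma(2-\alpha)$, uniformly in $\delta$ and $N$. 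Thus the tail has norm at most $2\Gamma(2-\alpha)\eta(N)$, uniformly in $\delta$. Choosing $N$ large so that $\eta(N)$ is small, then $\delta_1,\delta_2$ small so the $[0,N]$-piece is stable, makes $\|F(\delta_1)-F(\delta_2)\|$ arbitrarily small, and Hirsch's criterion yields $x\in\dom(A^{-\alpha})$.

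The main obstacle is precisely this tail estimate: the hypothesis only provides \emph{conditional} convergence of $\int^\infty \tau^{\alpha-1}\Ce_\tau(A)x\,\diff\tau$, so dominated convergence does not apply on $[N,\infty)$. The integration by parts transfers the oscillation into the primitive $G$, which the Cauchy property of $v$ forces to be uniformly small, while the $\delta$-uniform total-variation bound on $h(\delta,\cdot)$ depends crucially on the symmetric role of $\delta$ and $1$ at the two endpoints in the defining integral.
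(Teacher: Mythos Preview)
Your proposal is correct, and for the ``if'' direction it takes a genuinely different route from the paper.

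The paper argues entirely in the time domain: it shows that \eqref{ApL2} forces the pointwise decay $t^\alpha \Ce_t(A)x\to 0$ (condition (i) of Proposition~\ref{chardommean}), by splitting $\int_t^{2t}s^{\alpha-1}\Ce_s(A)x\,\diff s$ via the semigroup relation $T(s)=T(t)T(s-t)$ into a multiple of $t^\alpha\Ce_t(A)x$ plus a remainder $T(t)\int_0^t (r+t)^{\alpha-2}r\,\Ce_r(A)x\,\diff r$, and then handling this remainder by a second integration by parts against the primitive $R(r)=\int_r^\infty s^{\alpha-1}\Ce_s(A)x\,\diff s$. Once (i) is established, Proposition~\ref{chardommean} closes the argument.

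You instead bypass the pointwise decay entirely and verify Hirsch's criterion (Theorem~\ref{stilprop1}) directly in the resolvent domain: the kernel representation $F(\delta)=\int_0^\infty \tau^{\alpha-1}\Ce_\tau(A)x\,h(\delta,\tau)\,\diff\tau$ turns the question into a uniform tail estimate, which your integration by parts against $G(\tau)=\int_N^\tau r^{\alpha-1}\Ce_r(A)x\,\diff r$ together with the $\delta$-uniform bound $\int_0^\infty|\partial_\tau h(\delta,\tau)|\,\diff\tau\le 2\Gamma(2-\alpha)$ handles cleanly. Both proofs ultimately exploit the same idea---transferring the merely conditional convergence of $\int^\infty \tau^{\alpha-1}\Ce_\tau(A)x\,\diff\tau$ into a bounded primitive and integrating by parts against a kernel of bounded variation---but the paper's route yields the extra information $t^\alpha\Ce_t(A)x\to 0$ as a by-product, whereas yours is more streamlined and does not rely on the semigroup identity $T(s)=T(t)T(s-t)$. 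One minor point: when you say the boundary term at infinity vanishes because ``$h(\delta,\infty)=0$'', you are implicitly using that $G(\tau)$ stays bounded (it converges), which is exactly what the hypothesis provides.
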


\begin{proof}
Since
\[ z^{-\alpha} = \frac{1}{\Gamma(\alpha)} \int_0^\infty s^{\alpha -1}
e^{-sz}\, ds, \qquad z >0,
\]
we have $m(s)= \frac{1}{\Gamma(\alpha)} s^{\alpha-1}$, $s>0$,
and by Proposition \ref{chardommean}  it suffices to prove
that \eqref{ApL2} implies
\begin{equation}
t^{\alpha} \Ce_t(A)x\to 0,\qquad t\to\infty.
\label{ApNN}
\end{equation}

To this aim note that if \eqref{ApL2} holds,
then
\begin{eqnarray*}
\int_t^{2t}  s^{\alpha-1} \Ce_s(A)x\, ds
&=&\int_t^{2t} s^{\alpha-2}ds\int_0^t T(\tau)x\,d\tau\\
&+&T(t)\int_0^{t} (r+t)^{\alpha-2}\int_0^r T(\tau)x\,d\tau dr\\
&=& \frac{(1-2^{\alpha-1})}{1-\alpha}\,t^\alpha \Ce_t(A)x
+T(t)\int_0^{t} (r+t)^{\alpha-2}r \Ce_r(A)x dr,
\end{eqnarray*}
where the last sum goes to zero as $t \to \infty.$
Thus to prove \eqref{ApNN} it suffices to show that
\begin{equation*}
\lim_{t\to\infty}\,\int_0^{t} (r+t)^{\alpha-2} r\Ce_r(A)x\, dr=0.
\label{Apst}
\end{equation*}
Setting
\begin{eqnarray*}
G_t(r):=\frac{r^{2-\alpha}}{(r+t)^{2-\alpha}}, \,\, (t>0 \,\,
\text{is fixed}); \qquad R(r):=\int_r^\infty s^{\alpha-1}
\Ce_s(A)x\,ds, \qquad r \ge 0,
\end{eqnarray*}
where the second function is well-defined, continuously differentiable  and bounded on $[0,\infty)$ by our assumption,
write
\begin{equation}\label{summ}
\int_0^{t} (r+t)^{\alpha-2}r \Ce_r(A) x \, dr =- G_t(t)  R(t)+ \int_0^t
G^{'}_t(r) R(r) \, dr.
\end{equation}
We prove that both terms on the right hand side of \eqref{summ} converge to zero as $t  \to \infty,$ and
thus obtain the statement.

First note that for all $t >0$ and $r >0$
\begin{equation}
0<G_t(r)\le 1,\qquad  r>0; \quad \lim_{t\to\infty}\,G_t(r)=0.
\label{ApGN}
\end{equation}
Hence, by our assumption,
$
\lim_{t \to \infty}\norm{G_t(t)R(t)}=0.
$
To prove the convergence to zero of the other term note that
\[
G^{'}_t(r)=(2-\alpha)\left(\frac{r}{r+t}\right)^{1-\alpha}\frac{t}{(r+t)^2}
\]
is positive on $(0,\infty)$ for each $t >0.$
Let $\epsilon >0$ be fixed. Then by assumption there exists $b=b(\epsilon)$ such that
$
\|R(t)\|\le \epsilon
$
if $t \ge b.$
Now using \eqref{ApGN}
we have for large enough $t$
\begin{eqnarray*}
\left\|\int_0^t G^{'}_t(r) R(r) \, dr\right\|&\le& \int_0^b
G^{'}_t(r) \|R(r)\|\, dr+
\int_b^T G^{'}_t(r) \, dr\,\sup_{r\ge b}\|R(r)\|\\
&\le& \sup_{r\ge b}\|R(r)\| G_T(b) +
\epsilon G_t(t)<2\epsilon,
\end{eqnarray*}
and the statement follows.
\end{proof}

\section{Optimality of domain conditions}\label{optimality}

In this section we give a number of results showing that the
inverse theorems on rates proved in the previous sections are
optimal. The results will illustrate, in particular, that the
implications of the form
\[
\norm{C_t(A)x}=\mbox{O}(1/g(1/t)),\quad t\to\infty\quad
\Rightarrow\quad x\in \dom(g(A)),
\]
are far from being true, in general, and direct theorems on rates
obtained in \cite{GHT}  cannot be inverted.
 Thus to get positive statements one has to add
either ``extra rate'' or ``extra range'' assumptions as it was done above.

We start with introducing  basic objects  for constructing our
examples. Let $L_1:=L_1(1,\infty)$, and define a bounded
operator $A$ on $L_1$ by
\begin{equation}
(Au)(s):=\frac{u(s)}{s}, \qquad u\in L_1. \label{operator}
\end{equation}
Note that the $-A$ generates a contraction $C_0$-semigroup $(T(t))_{t \ge 0}$ given by
\begin{equation}
(T(t)u)(s):=e^{-t/s}u(s),\qquad t \ge 0.
\label{stilform1}
\end{equation}
Thus we have in particular
\[
(\Ce_t(A)u)(s)=\frac{1}{t}\int_0^t e^{-\tau/s}u(s)d\tau
=\frac{s(1-e^{-t/s})}{t}u(s),
\]
and
\begin{equation}
t\|\Ce_t(A)u\|_{L_1}=\norm{w_t u}_{L_1},\quad w_t(s):=s(1-e^{-t/s}) \qquad u\in L_1.
\label{stilform}
\end{equation}
A direct application of functional calculi rules reveals that
 for any Stieltjes function
$g$ the operator $g(A)$ is
of the form
\begin{eqnarray*}
 \label{domain} \dom(g(A))&=&\left\{u\in L_1:
\int_1^\infty  g(1/s)|u(s)|\,ds<\infty\right\},\\
(g(A)u)(s)&=&g(1/s)u(s), \qquad \text{for a.e.} \,\, s \ge 1.\notag
\end{eqnarray*}

It will be convenient to introduce the following family of norms on $L_1$:
\begin{equation}
N_{t}(u):=\frac{1}{t}\int_1^t s |u(s)|\,ds+
\int_t^\infty |u(s)|\,ds,\quad \qquad u\in L_1, \qquad t\geq 1.
\label{norms}
\end{equation}
The norms are equivalent to the original norm on $L_1:$
\begin{equation}
t^{-1}\|u\|_{L_1}\le N_{t}(u)\le \|u\|_{L_1},\qquad u\in L_1.
\label{norms1}
\end{equation}
Moreover, using the  inequalities
\begin{eqnarray*}
(1-e^{-1})\tau &\le& 1-e^{-\tau}\leq \tau,\qquad \tau\in (0,1),\\
1-e^{-1}&\le& 1-e^{-\tau}\leq 1,\qquad \tau\ge 1,
\end{eqnarray*}
and \eqref{stilform}, we obtain
\begin{equation}
(1-e^{-1})N_t(u)\leq \|C_t(A)u\|_{L_1}\leq N_{t}(u),\qquad t>1,\quad u\in L_1,
\label{es1}
\end{equation}

Let in this section $$ g\,\, \text{\emph{be a Stieltjes function
such that}}\,\, g
\sim (0,0,\mu) \,\, \text{\emph{and}}\,\, g(0+)=\infty.$$

First we show that Theorem \ref{stilthh} is sharp in the sense that for $A$ defined by \eqref{operator} and for a large class of $g$ the properties
\eqref{g1} and $x \in \dom (g(A))$ are in fact equivalent.
\begin{theorem}\label{L1}
The condition
\begin{equation}
\int_1^\infty\frac{|g{'}(1/t)| \norm{C_t(A)u}_{L_1}}{t^2}dt<\infty
\label{equiv1}
\end{equation}
holds  for all $u \in \dom(g(A))$ if and only if
$g$ is integrable on $(0,1)$ and there exists $c >0$ such that
\begin{equation}
\frac{1}{t}\int_0^t g(\tau)\,d\tau \leq c g(t), \qquad t\in (0,1).
\label{mainc6}
\end{equation}
\end{theorem}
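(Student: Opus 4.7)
The plan is to compute $J(u):=\int_1^\infty |g'(1/t)|\,\|\Ce_t(A)u\|_{L_1}/t^2\,dt$ explicitly on the multiplication model and thereby reduce the assertion to a Hardy-type condition on $g$. By \eqref{es1} the finiteness of $J(u)$ is equivalent to $\int_1^\infty |g'(1/t)| t^{-2} N_t(u)\,dt<\infty$; expanding $N_t(u)$ via \eqref{norms} and applying Fubini splits this as $I_1(u)+I_2(u)$, where
\[
I_1(u) = \int_1^\infty s|u(s)|\int_s^\infty\frac{|g'(1/t)|}{t^3}\,dt\,ds,\qquad
I_2(u) = \int_1^\infty |u(s)|\int_1^s\frac{|g'(1/t)|}{t^2}\,dt\,ds.
\]
The substitution $\tau = 1/t$ turns the inner integrals into $\int_0^{1/s}\tau|g'(\tau)|\,d\tau$ and $\int_{1/s}^{1}|g'(\tau)|\,d\tau = g(1/s)-g(1)$, respectively. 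Since $g\sim(0,0,\mu)$ forces $\tau g(\tau)\to 0$ as $\tau\to 0^+$, integration by parts gives $\int_0^{1/s}\tau|g'(\tau)|\,d\tau = G(s)-g(1/s)/s$, where $G(s):=\int_0^{1/s} g(\tau)\,d\tau$. The $g(1/s)$-contributions from $I_1$ and $I_2$ then cancel, yielding the clean identity
\[
J(u) = \int_1^\infty s\,G(s)\,|u(s)|\,ds - g(1)\|u\|_{L_1},
\]
valid whenever either side is finite.

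With $\dom(g(A))$ realised by \eqref{domain} as the weighted $L_1$ space of $u$ satisfying $\int_1^\infty g(1/s)|u(s)|\,ds<\infty$, the theorem reduces to the statement that $\int_1^\infty s\,G(s)\,|u(s)|\,ds<\infty$ for every such $u$ if and only if $G$ is finite on $[1,\infty)$ and there exists $c>0$ with $s G(s)\le c\,g(1/s)$ for $s\ge 1$. Finiteness of $G$ on $[1,\infty)$ is equivalent to integrability of $g$ on $(0,1)$, since $G$ is non-increasing with maximum $G(1)=\int_0^1 g(\tau)\,d\tau$, and under the change of variable $t=1/s\in(0,1]$ the inequality $s G(s)\le c\,g(1/s)$ reads exactly as \eqref{mainc6}. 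The sufficiency of these two conditions is then immediate from \eqref{domain}.

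For necessity, if $g$ fails to be integrable on $(0,1)$ then $G\equiv\infty$ on $[1,\infty)$, so $u=\mathbf{1}_{[1,2]}\in\dom(g(A))$ (the weight $g(1/s)$ being bounded on $[1,2]$) already forces $J(u)=\infty$. If $g$ is integrable on $(0,1)$ but \eqref{mainc6} fails, I would select $t_n\downarrow 0$ with $t_n^{-1}\int_0^{t_n} g \ge 4^n g(t_n)$, set $s_n=1/t_n$, and use continuity of $g$ and $G$ to produce pairwise disjoint intervals $I_n\ni s_n$ of positive Lebesgue measure on which $s G(s)\ge 2^n g(1/s)$. A function of the form $u=\sum_n c_n\mathbf{1}_{I_n}$ with weights $c_n$ chosen so that $\sum_n c_n g(1/s_n)|I_n|$ converges while $\sum_n c_n 2^n g(1/s_n)|I_n|$ diverges then lies in $\dom(g(A))$ and contradicts $J(u)<\infty$. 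The principal technical hurdle lies in precisely this last step: inflating the pointwise failure of \eqref{mainc6} at the discrete points $t_n$ to a failure on sets of positive measure, in a manner compatible with the weighted $L_1$ control supplied by the domain condition.
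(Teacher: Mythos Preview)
Your computational reduction is exactly the paper's: both arrive at the identity $\int_1^\infty |g'(1/t)|t^{-2}N_t(u)\,dt=\int_1^\infty W_g(s)|u(s)|\,ds$ with $W_g(s)=s\int_0^{1/s}g(\tau)\,d\tau-g(1)=sG(s)-g(1)$, and sufficiency is then immediate from \eqref{domain}. The approaches diverge only in the necessity direction.

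The paper handles necessity abstractly: it observes that the map $u\mapsto |g'(1/t)|t^{-2}\Ce_t(A)u$ from $(\dom(g(A)),\|\cdot\|_{\dom(g(A))})$ into $L_1((1,\infty);L_1)$ is closed, hence bounded by the closed graph theorem, yielding $\int_1^\infty W_g(s)|u(s)|\,ds\le c\|u\|_{\dom(g(A))}$ uniformly. Testing this against indicators $\mathbf{1}_{(a,b)}$ then gives the pointwise bound $W_g(s)\le c(g(1/s)+1)$, which is \eqref{mainc6}. You instead construct an explicit counterexample $u=\sum_n c_n\mathbf{1}_{I_n}$ when \eqref{mainc6} fails. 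Your construction is correct (the continuity of $sG(s)/g(1/s)$ on $[1,\infty)$, together with its boundedness on compacta, forces the blow-up to occur along $s_n\to\infty$, and the thickening to intervals $I_n$ works exactly as you indicate; choosing $c_n=2^{-n}/(g(1/s_n)|I_n|)$ finishes it). So the step you flag as the ``principal technical hurdle'' is in fact routine. What the closed graph route buys is economy: it bypasses this thickening entirely and delivers the pointwise inequality in one stroke. What your route buys is an explicit witness $u\in\dom(g(A))$ with $J(u)=\infty$, which is slightly more informative.
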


\begin{proof}
Assume that \eqref{equiv1} holds for all $u\in \dom(g(A)).$ Since
the operator $g(A)$ is closed, $\left(\dom (g(A)),
\norm{\cdot}_{\dom(g(A))}\right)$ is a Banach space with the graph
norm
\[
\norm{u}_{\dom(g(A))}:=
\norm{g(A)u}_{L_1}+\norm{u}_{L_1}=\int_1^\infty (g(1/s)+1)
|u(s)|\,ds.
\]
Consider a linear operator
\begin{eqnarray*}
G &:&\dom(g(A))\mapsto L_1 \left((1,\infty); L_1\right),\\
G u &:=& \frac{|g{'}(1/t)| C_t(A)u}{t^2}, \qquad u \in \dom
(g(A)).
\end{eqnarray*}
From our assumption it follows that $G$ is well defined. By a
standard argument (after passing to
appropriate a.e. convergent subsequences) $G$ is
closed. Therefore, by the closed graph theorem and \eqref{es1}, it follows that
\begin{equation}
\int_1^\infty\frac{|g{'}(1/t)|  N_t(u)}{t^2}\,dt\leq
c\norm{u}_{\dom(g(A))},\label{aest}
\end{equation}
for some constant $c>0$ not depending on  $u\in \dom(g(A))$.

Using \eqref{norms} and Fubini's theorem we can write down the right hand side of \eqref{aest} as follows:
\begin{eqnarray*}
\int_1^\infty\frac{|g{'}(1/t)|  N_t(u)}{t^2}\,dt
&=&\int_1^\infty\frac{|g{'}(1/t)|}{t^2}
\left\{\frac{1}{t}\int_1^t s |u(s)|\,ds+\int_t^\infty |u(s)|\,ds\right\} dt\\
&=&\int_1^\infty |u(s)| W_g(s)\,ds,
\label{best}
\end{eqnarray*}
where
\begin{eqnarray*}
W_g(s)&:=&-s\int_s^\infty \frac{g{'}(1/t)}{t^3} dt-\int_1^s \frac{g{'}(1/t)}{t^2} dt.
\label{Wu}
\end{eqnarray*}
Integrating by parts and  taking into account that
$\lim_{\tau\to 0+}\,\tau g(\tau)=0$ (by the bounded convergence
theorem) we infer that $g \in L_1(0,1)$ and for  every $s \ge 1$
\begin{equation}\label{www}
W_s(g)=-s\int_0^{1/s} g{'}(\tau)\tau d\tau-
\int_{1/s}^1 g{'}(\tau) d\tau
=s\int_0^{1/s} g(\tau)\,d\tau-g(1).
\end{equation}
Thus \eqref{aest} is satisfied for all $u \in \dom (g(A))$
if and only if
\begin{equation}
\int_1^\infty W_g(s)|u(s)|\,ds\leq c \int_{1}^{\infty}(g(1/s)+1) |u(s)|\, ds, \qquad u\in \dom(g(A)),
\label{aest1}
\end{equation}
where $W_s(g)$ is given by \eqref{www}.
In turn, \eqref{aest}  is equivalent to
\begin{equation}
W_g(s)\le c(g(1/s)+1),\qquad  s \ge 1. \label{W_sc}
\end{equation}
Indeed, it suffices to note that $\dom(g(A))$ contains all
integrable functions with compact support.
Writing down \eqref{aest1} for $u=1_{(a,b)}$, $1\le a<b<\infty$,
we obtain that
\[
\int_a^b W_g(s)\,ds\leq c \int_a^b (g(1/s)+1) \, ds.
\]
Hence
\[
F(s):=\int_1^s [c(g(1/t)+1)-W_g(t)]\,dt,\quad s\ge 1,
\]
is an increasing function, and then $F'(s)=c(g(1/s)+1)-W_g(s)\ge 0$, $s\ge 1$.

Therefore, \eqref{W_sc} can be rewritten as
\[
s\int_0^{1/s} g(\tau)\,d\tau-g(1)\le c(g(1/s)+1),\qquad s\ge 1,
\]
that is
\[
\frac{1}{t}\int_0^t g(\tau)\,d\tau\le  cg(t)+c+g(1),\qquad t\in (0,1).
\]
Since the function $g$  is decreasing on $(0,\infty),$ the last
inequality is equivalent to
 \eqref{mainc6} (with in general a new constant $c>0$).
\end{proof}

\begin{remark}
A  natural question is what are the functions $g$ satisfying \eqref{mainc6}.
To show that the class of such functions is quite large
we note that if
\begin{equation}
\tau^\alpha g(\tau)\quad \mbox{is increasing on}\,\,\,
(0,1) \label{lemmac1}
\end{equation}
for some $\alpha \in (0,1)$, then \eqref{mainc6} is satisfied, e.g
$g$ could be a power function. Indeed, if
\eqref{lemmac1} holds, then
\begin{equation}\label{alpha}
\int_0^t g(\tau)\,d\tau\le
 t^\alpha g(t)\int_0^t \frac{d\tau}{\tau^\alpha}=
\frac{t}{1-\alpha}g(t),\qquad t\in (0,1).
\end{equation}
On the other hand there are Stieltjes functions $g \sim (0,0,\mu)$
with $g(0+)=\infty$ for which \eqref{mainc6} is not true.
For instance, if
$
g(z):=\frac{z-1}{z\log z}
$
then  $g \not \in L_1(0,1).$
\end{remark}

Now we prove that Corollary \ref{corfirst} is optimal.

\begin{theorem}\label{L2}
The condition
\begin{equation}
\int_1^\infty\frac{g(1/t)\norm{C_t(A)u}_{L_1}}{t}dt<\infty
\label{equiv2a}
\end{equation}
holds for all $u\in \dom(g(A))$  if and only if $g$ is integrable
on $(0,1)$ and there exists $c >0$ such that
\begin{equation}
\frac{1}{t}\int_0^t g(\tau)\,d\tau +
\int_t^1\frac{g(\tau)}{\tau}\,d\tau \leq c g(t),\quad t\in
(0,1). \label{seccond}
\end{equation}
\end{theorem}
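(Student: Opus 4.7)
My plan is to imitate the proof of Theorem \ref{L1} as closely as possible, since the only change is that the weight $|g'(1/t)|/t^2$ is replaced by $g(1/t)/t$. The two directions will be handled in parallel via the same pointwise reformulation.

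First, for the necessity of \eqref{seccond}, I would assume \eqref{equiv2a} holds for every $u\in\dom(g(A))$ and introduce the linear operator
\[
G:\dom(g(A))\to L_1((1,\infty);L_1),\qquad Gu:=\frac{g(1/t)}{t}\,C_t(A)u,
\]
which is closed by Fatou/dominated convergence after passing to a.e.\ convergent subsequences. The closed graph theorem plus the lower bound in \eqref{es1} would then yield a constant $c>0$ such that
\[
\int_1^\infty\frac{g(1/t)\,N_t(u)}{t}\,dt\le c\,\|u\|_{\dom(g(A))}\qquad\text{for all }u\in\dom(g(A)).
\]
Using the explicit form \eqref{norms} of $N_t$ and Fubini's theorem, the left-hand side rewrites as $\int_1^\infty W_g(s)|u(s)|\,ds$ with
\[
W_g(s)=s\int_s^\infty\frac{g(1/t)}{t^3}\,t\,dt+\int_1^s\frac{g(1/t)}{t}\,dt
=s\int_0^{1/s}g(\tau)\,d\tau+\int_{1/s}^1\frac{g(\tau)}{\tau}\,d\tau,
\]
where the second equality uses the substitution $\tau=1/t$ and, as in Theorem \ref{L1}, forces $g\in L_1(0,1)$ for the bound to be finite.

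Next, testing the resulting inequality $\int_1^\infty W_g(s)|u(s)|\,ds\le c\int_1^\infty(g(1/s)+1)|u(s)|\,ds$ against characteristic functions $u=\mathbf{1}_{(a,b)}$ with $1\le a<b<\infty$ (which belong to $\dom(g(A))$) and then differentiating the resulting monotone primitive, exactly as in Theorem \ref{L1}, produces the pointwise bound $W_g(s)\le c(g(1/s)+1)$ for all $s\ge 1$. Substituting $t=1/s\in(0,1)$ gives
\[
\frac{1}{t}\int_0^t g(\tau)\,d\tau+\int_t^1\frac{g(\tau)}{\tau}\,d\tau\le c(g(t)+1),\qquad t\in(0,1),
\]
and since $g$ is decreasing with $g(0+)=\infty$ we have $g(t)\ge g(1)>0$ on $(0,1)$, so the additive constant $c$ can be absorbed into $cg(t)$ (with a possibly larger constant), yielding \eqref{seccond}.

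For sufficiency, the argument runs in reverse: assuming \eqref{seccond} (which in particular requires $g\in L_1(0,1)$), the computation of $W_g(s)$ above gives $W_g(s)\le c\,g(1/s)\le c(g(1/s)+1)$, hence by Fubini
\[
\int_1^\infty\frac{g(1/t)\,N_t(u)}{t}\,dt=\int_1^\infty W_g(s)|u(s)|\,ds\le c\|u\|_{\dom(g(A))}<\infty
\]
for every $u\in\dom(g(A))$; combined with the upper bound $\|C_t(A)u\|_{L_1}\le N_t(u)$ from \eqref{es1}, this proves \eqref{equiv2a}. The only delicate step, and the one I expect to take the most care, is the Fubini swap together with the substitution $\tau=1/t$: one needs to justify that all terms are finite (so that the integrability of $g$ near $0$ is genuinely equivalent to the finiteness of the expression, not merely a consequence of it), which is handled exactly as in the proof of Theorem \ref{L1} using $\lim_{\tau\to 0+}\tau g(\tau)=0$.
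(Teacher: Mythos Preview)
Your proposal is correct and follows essentially the same approach as the paper, which merely sketches the argument by saying it parallels the proof of Theorem~\ref{L1} and writing down the kernel $V_s(g)=s\int_0^{1/s} g(\tau)\,d\tau+\int_{1/s}^1 \frac{g(\tau)}{\tau}\,d\tau$ (your $W_g(s)$). Your only superfluous remark is the appeal to $\lim_{\tau\to 0+}\tau g(\tau)=0$ at the end: unlike in Theorem~\ref{L1}, no integration by parts is needed here, and the integrability of $g$ on $(0,1)$ is read off directly from the finiteness of $s\int_0^{1/s} g(\tau)\,d\tau$.
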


\begin{proof}
The proof is similar to the proof of  Theorem \ref{L1} and will
only be sketched. Note that
\begin{eqnarray*}
\int_1^\infty\frac{g(1/t)  N_t(u)}{t}\,dt
&=&\int_1^\infty\frac{g(1/t)|}{t}
\left\{\frac{1}{t}\int_1^t s |u(s)|\,ds+\int_t^\infty |u(s)|\,ds\right\} dt
\\
&=&\int_1^\infty |u(s)| V_s(g)\,ds,
\\
\text{where}\quad  \qquad  V_s(g)&:=& s\int_0^{1/s} g(\tau)\,d\tau
+\int_{1/s}^1 \frac{g(\tau)}{\tau}\, d\tau.
\end{eqnarray*}
Then by the argument analogous to that from the proof of Theorem
\ref{L1},  \eqref{equiv2a} is equivalent to the
inequality
\begin{equation}
V_s(g)\le c(g(1/s)+1),\qquad s\ge 1,
\label{V_sc}
\end{equation}
for some constant $c >0,$  which in turn is equivalent to
\eqref{seccond}.
\end{proof}

\begin{remark}
Observe  that if there exist $\alpha,\beta \in (0,1)$ such that
$0< \beta < \alpha < 1$ and
\begin{equation}
\tau^\alpha g(\tau)\quad \mbox{is increasing on} \,\, (0,1), \,\,
\tau^\beta g(\tau)\quad \mbox{is decreasing  on}\,\, (0,1),
\label{lemmac10}
\end{equation}
 then
\eqref{seccond} holds. Indeed, in view of \eqref{alpha} it suffices to note that
\[
\int_t^1 \frac{g(\tau)}{\tau}\, d\tau\le
 t^\beta g(t)\int_t^1 \frac{d\tau}{\tau^{1+\beta}}\le
\frac{g(t)}{\beta},\qquad t\in (0,1).
\]
\end{remark}

\begin{example}
Observe that the Stieltjes function $ g(z)=(z-1)^{-1}\log z $
satisfies \eqref{mainc6} but it does not satisfy \eqref{seccond}.
Indeed, if $t\in (0,1/2)$ then we have
\begin{eqnarray*}
\frac{1}{t}\int_0^t \frac{\log \tau\,d\tau}{\tau-1}\le
-\frac{2}{t}\int_0^t \log \tau\,d\tau
\le 2\frac{\log t}{t-1}=2g(t).
\end{eqnarray*}

On the other hand,
\[
\int_t^1 \frac{\log \tau\,d\tau}{(\tau-1)\tau}\ge
-\int_t^1 \frac{\log \tau\,d\tau}{\tau}=\frac{\log^2 t}{2},
\]
and \eqref{seccond} is violated.
\end{example}

Thus if $g$ satisfies the conditions of Theorem \ref{L2} and $u
\not \in \dom (g(A))$ then the integral in \eqref{equiv2a} diverges. In this case \eqref{equiv2a} can hardly be written in
the form of sup-norm estimates. To circumvent this drawback, we
use the notion of slowly varying function. Recall  that (see \cite[Chapter 1]{Seneta}) a measurable
function $\epsilon: (a,\infty)\mapsto (0,\infty)$, $a\ge 0$, is
called {\it slowly varying} (at infinity) if for all $\lambda>0$ one has
$
\lim_{t\to+\infty}\,{\epsilon(\lambda t)}/{\epsilon(t)}=1.
$
 We proceed with a result which in a sense complements
Theorem \ref{L2}.

\begin{theorem}\label{L10}  Assume that
\eqref{lemmac1} holds. Let $\epsilon$ be a slowly varying on
$(g(1),\infty)$ function. Then the function
\[
y(s):=-\frac{g{'}(1/s)}{s^2g^2(1/s)\epsilon(g(1/s))},\qquad s>1,
\]
is positive,  belongs to $L_1,$ and satisfies
\begin{equation}
N_{t}(y)={\rm O}\left(\frac{1}{g(1/t)\epsilon(g(1/t))}\right),\qquad t\to\infty.
\label{estim1}
\end{equation}
Moreover,  $y\in \dom(g(A))$ if and only if
\begin{equation}
\int_{g(1)}^\infty\frac{d\tau}{\tau \epsilon(\tau)}<\infty.
\label{epstheorem}
\end{equation}
\end{theorem}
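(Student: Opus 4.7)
My plan is to base the entire argument on the change of variable $\tau=g(1/s)$ for $s>1$. Because $g$ is Stieltjes with $g(0+)=\infty$ and $\lim_{s\to\infty}g(s)=0$, this substitution maps $(1,\infty)$ bijectively onto $(g(1),\infty)$ with $d\tau=-g'(1/s)\,ds/s^2>0$, and by the very definition of $y$ one computes directly
\[
y(s)\,ds=\frac{d\tau}{\tau^2\epsilon(\tau)},\qquad g(1/s)\,y(s)\,ds=\frac{d\tau}{\tau\,\epsilon(\tau)}.
\]
Positivity of $y$ is then immediate since $g'<0$. Integrating these identities over $(1,\infty)$, in conjunction with the explicit description of $\dom(g(A))$ for the multiplication operator \eqref{operator}, reduces the $L_1$-membership to the finiteness of $\int_{g(1)}^\infty d\tau/(\tau^2\epsilon(\tau))$, which is automatic since $1/\epsilon$ is slowly varying (hence $1/\epsilon(\tau)=o(\tau^\delta)$ for every $\delta>0$), and reduces the characterization $y\in\dom(g(A))\Leftrightarrow\eqref{epstheorem}$ to the identity $\int_1^\infty g(1/s)y(s)\,ds=\int_{g(1)}^\infty d\tau/(\tau\epsilon(\tau))$.

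To establish \eqref{estim1} I would split $N_t(y)$ according to \eqref{norms} into its tail and averaged pieces. The tail transforms to $\int_t^\infty y(s)\,ds=\int_{g(1/t)}^\infty d\tau/(\tau^2\epsilon(\tau))$, and Karamata's theorem for slowly varying functions gives $\int_r^\infty d\tau/(\tau^2\epsilon(\tau))\sim 1/(r\epsilon(r))$ as $r\to\infty$, yielding the required $O(1/(g(1/t)\epsilon(g(1/t))))$ upon setting $r=g(1/t)$. For the averaged piece $\tfrac{1}{t}\int_1^t s\,y(s)\,ds$ hypothesis \eqref{lemmac1} enters decisively as an effective lower bound on $g^{-1}$: applying the monotonicity of $\tau^\alpha g(\tau)$ on $(0,1)$ to $\tau_1=1/t<\tau_2=g^{-1}(\tau)$ (both in $(0,1)$ when $\tau\in(g(1),g(1/t))$) yields $g(1/t)/\tau\le(tg^{-1}(\tau))^\alpha$, hence
\[
\frac{s}{t}=\frac{1}{t\,g^{-1}(\tau)}\le\left(\frac{\tau}{g(1/t)}\right)^{1/\alpha},\qquad \tau\in(g(1),g(1/t)).
\]
Substituting produces $\tfrac{1}{t}\int_1^t s\,y(s)\,ds\le g(1/t)^{-1/\alpha}\int_{g(1)}^{g(1/t)}\tau^{1/\alpha-2}\,d\tau/\epsilon(\tau)$; since $1/\alpha-2>-1$, a second application of Karamata gives the last integral of order $g(1/t)^{1/\alpha-1}/\epsilon(g(1/t))$, and the two factors collapse exactly to $O(1/(g(1/t)\epsilon(g(1/t))))$. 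Adding this to the tail estimate yields \eqref{estim1}.

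The main obstacle is precisely the averaged piece: using only the trivial bound $s/t\le 1$ produces the useless $O(1)$, whereas the two pieces of $N_t(y)$ must both be controlled at the sharp rate $1/(g(1/t)\epsilon(g(1/t)))$. The role of the regular-variation hypothesis \eqref{lemmac1} is exactly to force $g^{-1}$ to be polynomially large enough so that, after substitution, the integrand becomes a power with exponent $>-1$ to which Karamata applies and matches the tail term. Without such a structural assumption on $g$ the first piece cannot in general be estimated sharply.
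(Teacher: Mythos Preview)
Your proof is correct and shares the paper's overall scaffolding: the same change of variable $\tau=g(1/s)$ disposes of positivity, $L_1$-membership and the domain characterization identically, and both proofs split $N_t(y)$ into the tail and averaged pieces via \eqref{norms}.

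The treatments of the two pieces differ. For the tail, the paper simply pulls $1/\epsilon(g(1/t))$ out of $\int_{g(1/t)}^\infty d\tau/(\tau^2\epsilon(\tau))$, which tacitly presumes $\epsilon$ increasing; your appeal to Karamata's theorem is cleaner and requires no such side assumption. For the averaged piece the paper proceeds differently: it first bounds $s\,y(s)\le 1/(g(1/s)\epsilon(g(1/s)))$ via the elementary inequality $\tau|g'(\tau)|\le g(\tau)$ (eq.~\eqref{rem3}), then invokes the Seneta-type fact that $t^{\pm\delta}\epsilon(t)$ is equivalent to a monotone function to manufacture a $\beta\in(\alpha,1)$ for which $s^\beta/(g(1/s)\epsilon(g(1/s)))$ is essentially increasing, and finally pulls out the maximum and integrates $s^{-\beta}$. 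Your route stays in the $\tau$-variable throughout: you use hypothesis \eqref{lemmac1} directly to bound $s/t\le(\tau/g(1/t))^{1/\alpha}$ and then apply Karamata to the resulting integral with exponent $1/\alpha-2>-1$. Both arguments exploit exactly the same two inputs (slow variation of $\epsilon$ and the monotonicity of $\tau^\alpha g(\tau)$), but yours packages them more uniformly through Karamata and avoids the detour through \eqref{rem3} and the auxiliary exponent $\beta$.
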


\begin{proof}
Since $g(0+)=\infty,$ by
the change of  variable $\tau=g(1/s), \tau \in (g(1/t),\infty)$, we obtain
for any $t\ge 1$:
\begin{eqnarray}
\int_t^\infty y(s)\,ds&=&-\int_t^\infty
\frac{g{'}(1/s)\,ds}{s^2 g^2(1/s)\epsilon(g(1/s))}
=
\int_{g(1/t)}^\infty \frac{d\tau}{\tau^2 \epsilon(\tau)}
\label{*}\\
&\le& \frac{1}{\epsilon(g(1/t)}\int_{g(1/t)}^\infty \frac{d\tau}{\tau^2}
= \frac{1}{ g(1/t) \epsilon(g(1/t))},\notag
\label{est2}
\end{eqnarray}
hence $y\in L_1$.
Similarly,
\[
\int_1^\infty g(1/s)y(s)\, ds=
-\int_1^\infty \frac{g{'}(1/s)\,ds}{s^2g(1/s)\epsilon(g(1/s))}=
\int_{g(1)}^\infty \frac{d\tau}{\tau \epsilon(\tau)},
\]
and, in view of the description of $\dom (g(A))$ in \eqref{domain}, the statement  follows.

It remains to prove that $y$ satisfies \eqref{estim1}.
Observe that
by means of
\eqref{norms} and \eqref{*} one can rewrite \eqref{estim1} as
\begin{equation}
\frac{1}{t}\int_1^t s y(s)\,ds\leq
 \frac{c}{g(1/t)q(g(1/t))},\quad t\ge 1,
\label{estnew}
\end{equation}
for some constant $c > 0.$ By \cite[Section 1.5]{Seneta},
 for any $\delta>0$
the function $t^{-\delta}\epsilon(t)$ is equivalent as $t \to \infty$ to a positive function decreasing on $(g(1),\infty)$,
and the function $t^{\delta}\epsilon(t)$ is equivalent as $t\to \infty$ to a positive function increasing on $(g(1),\infty).$
Therefore, since $g(0+)=\infty$ and $g$ is decreasing, for any $\delta >0,$
$
g^{-\delta}(\tau)\epsilon (g(\tau))$ is equivalent to a function increasing  on $(1,\infty).
$
Choose now positive $\delta$  such that
$\beta:=(1+\delta)\alpha\in (0,1),$ where $\alpha$ is defined in \eqref{lemmac1}. Then
\[
\tau^\beta g(\tau)\epsilon(g(\tau))= (\tau^\alpha
g(\tau))^{1+\delta} g^{-\delta}(\tau)\epsilon(g(\tau))
\]
is equivalent to a function increasing on $(0,1).$ In other words,
the function $g(1/s)\epsilon(g(1/s))$ is equivalent to a
measurable function $\psi$ such that
$
{s^\beta }/{\psi (s)}$ is increasing on  $(1,\infty).
$
Hence since $\tau |g'(\tau)|\le g(\tau), \tau >0,$  by
\eqref{rem3},  we obtain for every $t\ge 1$:
\begin{eqnarray*}
\frac{1}{t}\int_1^t s y(s)\,ds&=&\frac{1}{t}\int_1^t \frac{|g{'}(1/s)|\,ds}{sg^2(1/s)\epsilon(g(1/s))}
\le \frac{1}{t}\int_1^t \frac{ds}{g(1/s)\epsilon(g(1/s))}\\
&\le& \frac{c}{t}\int_{1}^{t}\frac{s^\beta \, ds}{s^\beta \psi (s)}
\le
\frac{c}{t} \frac{t^\beta}{\psi (t)} \int_{1}^{t}\frac{ds}{s^\beta}\\
&\le&
C \frac{t^\beta}{t g(1/t) \epsilon(g(1/t))}\int_1^t \frac{ds}{s^\beta}
\le C \frac{1}{(1-\beta) g(1/t) \epsilon(g(1/t))},
\end{eqnarray*}
where $c, C$ are positive constants,
thus the proof is complete.
\end{proof}

Theorem \ref{L10} and \eqref{es1}
imply the following statement (cf. Corollary
\ref{qgt}).

\begin{corollary}\label{C_t}
Assume that the functions $g$ and  $\epsilon$ satisfy the conditions of
Theorem \ref{L10}. If
\[
\int_{g(1)}^\infty \frac{d\tau}{\tau\epsilon(\tau)}=\infty,
\]
then  there exists  $y\in L_1$ such that
\begin{equation}\label{supest}
\|\Ce_t(A)y\|_{L_1}=\rmO\left(\frac{1}{g(1/t)\epsilon(g(1/t))}\right),\quad
t\to\infty,
\end{equation}
but $y\not\in \dom(g(A))$.
\end{corollary}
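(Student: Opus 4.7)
The plan is to simply take the function $y$ constructed in Theorem \ref{L10} and observe that, once the hypothesis $\int_{g(1)}^\infty d\tau/(\tau\epsilon(\tau))=\infty$ is imposed, all three required properties fall out with no additional work: membership in $L_1$, the sup-norm rate \eqref{supest}, and failure of membership in $\dom(g(A))$.

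Concretely, I would set
\[
y(s):=-\frac{g'(1/s)}{s^2 g^2(1/s)\epsilon(g(1/s))},\qquad s>1,
\]
and then quote Theorem \ref{L10} in three steps. Step one: $y$ is positive and lies in $L_1(1,\infty)$, and it satisfies the norm bound
\[
N_t(y)=\rmO\!\left(\frac{1}{g(1/t)\epsilon(g(1/t))}\right),\qquad t\to\infty.
\]
Step two: to convert this into the required estimate on $\|\Ce_t(A)y\|_{L_1}$, I would apply the upper inequality in \eqref{es1}, namely $\|\Ce_t(A)u\|_{L_1}\le N_t(u)$, to $u=y$. This yields \eqref{supest} immediately.

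Step three: the divergence hypothesis $\int_{g(1)}^\infty d\tau/(\tau\epsilon(\tau))=\infty$ is exactly the negation of the membership criterion \eqref{epstheorem} in Theorem \ref{L10}. Hence $y\notin\dom(g(A))$, completing the proof.

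There is essentially no obstacle in this argument: the entire content has been packaged into Theorem \ref{L10} and the two-sided estimate \eqref{es1}. If anything, the only point worth stating explicitly is that one should verify the hypotheses of Theorem \ref{L10} (the assumption \eqref{lemmac1} on $g$ together with slow variation of $\epsilon$) are inherited here, which they are by the statement of the corollary.
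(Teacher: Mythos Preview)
Your proposal is correct and matches the paper's own argument exactly: the paper simply states that the corollary follows from Theorem~\ref{L10} together with the estimate~\eqref{es1}, which is precisely the three-step unpacking you describe.
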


\begin{remark}\label{rempdop}
By \cite[Theorem 4.6]{GHT} if  $\epsilon: (0,\infty)\to (0,\infty)$ is
an increasing   function such that $\lim_{t\to
\infty}\,\epsilon(t)=\infty$, then there exists $x\in \dom(g(A))$ such
that
\[
\sup_{t\ge 1}\,
g(1/t)\epsilon(g(1/t))\norm{\Ce_t(A)x}_{L_1}=\infty.
\]
Thus conditions like \eqref{supest} cannot hold for all elements from the corresponding domain.
\end{remark}

\begin{example}\label{ex1}
Observe that $g(z)=z^{-\gamma}, \gamma \in (0,1),$ is a Stieltjes
function satisfying  \eqref{lemmac1} for $\alpha \in (\gamma, 1).$
Therefore, $g$ satisfies also \eqref{mainc6}. (The latter fact can
also be checked directly.)
 Since
\begin{equation}
\epsilon(s):=\log (s+2)\log(\log(s+3)),\qquad s\ge 0,
\label{qqq}
\end{equation}
is slowly varying on $(0,\infty),$  the functions $g$ and $\epsilon$
satisfy the conditions of Theorem \ref{L1}. Hence by Corollary
\ref{C_t} there exists
\[
y\in L_1,\qquad y\not\in\dom(A^{-\gamma}),
\]
such that
\[
\|\Ce_t(A)y\|_{L_1}=\mbox{O}
\left(\frac{1}{t^{\gamma}\log(t) \log(\log t)}\right),\;\;t\to\infty.
\]
\end{example}
\begin{example}\label{ex2}
Note that the Stieltjes function $ g(z)=\log(1+z^{-1})$
and the function $\epsilon$ defined by (\ref{qqq}) satisfy the
conditions of Theorem \ref{L10} (since $g$ satisfies
\eqref{lemmac1} for any $\alpha\in (0,1)$). Hence by Corollary
\ref{C_t}, there exists
\[
y\in L_1, \quad y \not\in\dom(\log(I+A^{-1})),
\]
such that
\[
\|\Ce_t(A)y\|_{L_1}=\mbox{O}\left(\frac{1}{\log t[\log(\log t)
\log(\log(\log t))]}\right),\quad t\to\infty.
\]
\end{example}

\section{Appendix}
Recall that if $(T(t))_{t \ge 0}$ is a bounded $C_0$-semigroup on $X$ then for each $x \in X\setminus\{0\}$ the Ces\'aro means $\Ce_t(A)x$ cannot decay faster than $1/t$
as $t \to \infty.$ The proposition below shows that it is not possible to `improve' this extremal rate of decay  of $\Ce_t(A)x$ by requiring the smallness of $\Ce_t(A)x$
in an integral sense.

\begin{proposition}\label{cesarozero1T}
Let  $(T(t))_{t\ge 0}$ be a bounded $C_0$-semigroup
on a Banach space $X$ with generator $-A$.
If for $x \in X$ there exists $\{t_k: k \ge1 \}\subset (0,\infty),$  $t_k\to\infty$, $k\to\infty$,
such that
\begin{equation}
\mbox{weak}-\lim_{k\to \infty} \,\frac{1}{t_k}\int_0^{t_k} s \Ce_s(A)x\,ds\,=0,
\label{zero12T}
\end{equation}
then $x=0$.
In particular, if
\begin{equation*}
\mbox{weak}-\lim_{t\to\infty}\, t\Ce_t(A)x=0,
\end{equation*}
then $x=0.$
\end{proposition}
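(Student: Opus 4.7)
The plan is to exploit the elementary identity $A\bigl(s\Ce_s(A)x\bigr) = x - T(s)x$, valid for every $x \in X$, to translate the integral-mean hypothesis into information about $A$. Setting $F(s) := s\,\Ce_s(A) x = \int_0^s T(\tau) x \, d\tau$ and $Y(t) := \frac{1}{t}\int_0^t F(s)\, ds$, the hypothesis reads $Y(t_k) \weakcon 0$. Since $F(s) \in \dom(A)$ with $AF(s) = x - T(s)x$ depending continuously on $s$, closedness of $A$ under Bochner integration gives $Y(t) \in \dom(A)$ and
\[
A Y(t) = x - \Ce_t(A) x, \qquad t > 0.
\]

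I would then apply the bounded operator $(1+A)^{-1}$, available since $-1 \in \rho(A)$ by boundedness of the semigroup. Combining the resolvent identity $(1+A)^{-1} A y = y - (1+A)^{-1} y$ on $\dom(A)$ with the commutation $(1+A)^{-1}\Ce_t(A) = \Ce_t(A)(1+A)^{-1}$, one gets
\[
\Ce_{t_k}(A)(1+A)^{-1} x - (1+A)^{-1} x = (1+A)^{-1} Y(t_k) - Y(t_k).
\]
Both terms on the right tend weakly to $0$ by weak-to-weak continuity of $(1+A)^{-1}$, so $u_k := \Ce_{t_k}(A)(1+A)^{-1} x \weakcon (1+A)^{-1} x$.

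The crux is a closed-graph argument. Since
\[
A u_k = \frac{(1+A)^{-1} x - T(t_k)(1+A)^{-1} x}{t_k} \longrightarrow 0 \quad \text{in norm},
\]
and since the graph of the closed operator $A$, being a norm-closed linear (hence convex) subspace of $X \times X$, is weakly closed by Mazur's theorem, we conclude that $(1+A)^{-1} x \in \dom(A)$ and $A(1+A)^{-1} x = 0$. Combined with $A(1+A)^{-1} = I - (1+A)^{-1}$ in $\Lin(X)$, this forces $(1+A)^{-1} x = x$, whence $x \in \dom(A)$ and $Ax = 0$.

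Finally, if $Ax = 0$, then $T(s) x = x$ for all $s$, so $F(s) = sx$ and $Y(t) = tx/2$. The weak limit $Y(t_k) \weakcon 0$ along $t_k \to \infty$ then yields $\phi(x) = 0$ for every $\phi \in X^*$, i.e.\ $x = 0$. The ``in particular'' assertion is immediate, because weak convergence $t\Ce_t(A) x \weakcon 0$ propagates to its Ces\`aro averages, so the main hypothesis is met for any $t_k \to \infty$. The principal subtlety is the transition from $Au_k \to 0$ in norm and $u_k \weakcon (1+A)^{-1} x$ to $A(1+A)^{-1} x = 0$, which is exactly where the weak closedness (via Mazur) of the graph of $A$ is used.
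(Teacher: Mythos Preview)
Your proof is correct and follows essentially the same strategy as the paper: reduce, via resolvent manipulation, to $x\in\ker(A)$ and then finish with the observation that $Y(t)=\tfrac{t}{2}x$ on $\ker(A)$. The one technical difference is that where you invoke Mazur's theorem and the weak closedness of the graph of $A$ to pass from $u_k\weakcon(1+A)^{-1}x$, $Au_k\to 0$ to $A(1+A)^{-1}x=0$, the paper simply applies the \emph{bounded} operator $[A(1+A)^{-1}]^2$ directly to $Y(t_k)$, obtaining
\[
[A(1+A)^{-1}]^2 Y(t)=A(1+A)^{-2}x-\tfrac{1}{t}(I-T(t))(1+A)^{-2}x,
\]
from which $A(1+A)^{-2}x=0$ (hence $x\in\ker A$) follows immediately by weak--weak continuity of bounded operators, with no graph argument needed. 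Your route is slightly longer but perfectly valid; the paper's extra application of $A(1+A)^{-1}$ trades the closed-graph step for a second resolvent factor.
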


\begin{proof}
Since
\[
 tA(I+A)^{-1} \Ce_t(A)x=(I-T(t))(I+A)^{-1}x,\;\;t>0,
\]
we have
\[
[A(I+A)^{-1}]^2\frac{1}{t}\int_0^t s \Ce_s(A)x\,ds=
A(I+A)^{-2}x-\frac{(I-T(t))}{t}(I+A)^{-2}x.
\]
As the operator $A(I+A)^{-1}$ is bounded,
the latter equality and  (\ref{zero12T}) imply that
$A(I+A)^{-2}x=0$ and then $x\in \ker{A}$.
But if  $x\in \ker{A}$ then
\[
\frac{1}{t}\int_0^t s \Ce_s(A)x\,ds = \frac{1}{t}\int_{0}^{t} s \,
ds\, x = \frac{t}{2}x,
\]
and, using (\ref{zero12T}) once again, we conclude that $x=0$.
\end{proof}

\begin{theorem}\label{cesarozero1}
Let  $(T(t))_{t\ge 0}$ be a bounded $C_0$-semigroup
on a Banach space $X$ with generator $-A$.
Let $\varphi$
be a positive, increasing on $[1,\infty)$ function such that
\begin{equation}
\int_1^\infty \frac{dt}{t\varphi(t)}=\infty.
\label{phiAp}
\end{equation}
If $x \in X$ satisfies
\begin{equation}
\int_1^\infty \frac{\|\Ce_t(A)x\|}{\varphi(t)}\,dt<\infty,
\label{zero12}
\end{equation}
then $x=0$.
In particular, if
\begin{equation*}
\int_1^\infty \frac{\|\Ce_t(A)x\|}{\log(1+t)}\,dt<\infty,
\end{equation*}
then $x=0.$
\end{theorem}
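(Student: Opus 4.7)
The plan is to reduce Theorem \ref{cesarozero1} to Proposition \ref{cesarozero1T} by exhibiting a sequence $t_k \to \infty$ along which $\frac{1}{t_k}\int_0^{t_k} s\,\Ce_s(A)x\,ds$ tends to zero in norm (and thus weakly). Set $f(s):=\|\Ce_s(A)x\|$ (continuous and bounded by $M\|x\|$) and $G(t):=\int_0^t s f(s)\,ds$. It suffices to show that $\liminf_{t\to\infty} G(t)/t = 0$, since then $\bigl\|\frac{1}{t_k}\int_0^{t_k} s\,\Ce_s(A)x\,ds\bigr\| \le G(t_k)/t_k \to 0$ and Proposition \ref{cesarozero1T} forces $x=0$.

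Suppose toward contradiction that there exist $c>0$ and $T_0\ge 1$ with $h(t):=G(t)/t \ge c$ for all $t\ge T_0$. Since $G$ is $C^1$ with $G'(t)=tf(t)$, differentiation gives $f(t) = h(t)/t + h'(t)$, hence
\begin{equation*}
\int_{T_0}^{T}\frac{f(t)}{\varphi(t)}\,dt \;=\; \int_{T_0}^{T}\frac{h(t)}{t\,\varphi(t)}\,dt \;+\; \int_{T_0}^{T}\frac{h'(t)}{\varphi(t)}\,dt.
\end{equation*}
The first integral is at least $c\int_{T_0}^T \frac{dt}{t\,\varphi(t)}$, which tends to $+\infty$ by \eqref{phiAp}. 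For the second, I apply integration by parts in the Lebesgue--Stieltjes sense, which is legitimate since $h$ is absolutely continuous on $[T_0,T]$ and $1/\varphi$ is of bounded variation there (being positive and decreasing as $\varphi$ is positive and increasing): it equals
\begin{equation*}
\frac{h(T)}{\varphi(T)}-\frac{h(T_0)}{\varphi(T_0)}+\int_{T_0}^{T} h(t)\,d\!\left(-\tfrac{1}{\varphi}\right)(t),
\end{equation*}
which is bounded below by $-h(T_0)/\varphi(T_0)$ because $h\ge 0$ and $d(-1/\varphi)$ is a non-negative Borel measure. Combining the two contributions, the left-hand side grows without bound as $T\to\infty$, contradicting \eqref{zero12}.

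Therefore $\liminf_{t\to\infty} G(t)/t = 0$, so one can select $t_k\to\infty$ with $G(t_k)/t_k\to 0$. Proposition \ref{cesarozero1T} applied along this sequence yields $x=0$. The ``in particular'' assertion is the special case $\varphi(t)=\log(1+t)$, for which $\int_1^\infty \frac{dt}{t\log(1+t)}=\infty$ is immediate. The only genuinely delicate step is the integration by parts for a merely monotone (possibly non-differentiable) $\varphi$; this is handled cleanly by Lebesgue--Stieltjes calculus together with the definite sign of the measure $d(1/\varphi)$.
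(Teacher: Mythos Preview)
Your proof is correct and follows essentially the same route as the paper: both reduce to Proposition~\ref{cesarozero1T} by showing $\liminf_{t\to\infty}\Theta(t)/t=0$ for $\Theta(t)=\int_0^t s\|\Ce_s(A)x\|\,ds$ via an integration by parts that isolates the divergent term $\int \frac{\Theta(t)}{t^2\varphi(t)}\,dt$ (equivalently, $\int \frac{h(t)}{t\varphi(t)}\,dt$) and exploits the sign of $d(1/\varphi)$. The only cosmetic difference is that the paper first establishes $\int_1^\infty \frac{\Theta(t)}{t^2\varphi(t)}\,dt<\infty$ and then contrasts it with \eqref{phiAp}, whereas you fold the contradiction directly into the assumption $h(t)\ge c$.
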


\begin{proof}
Define
\[
\Theta(t):=\int_0^t s\|C_s(A)x\|\,ds, \qquad t \ge 1.
\]
If $s>1$, then
\begin{eqnarray*}
\int_1^s \frac{\|\Ce_t(A)x\|}{\varphi(t)}dt&=&
\int_1^s \frac{1}{t\varphi(t)}d\Theta(t)
\\
&=&\frac{\Theta(s)}{s\varphi(s)}-\frac{\Theta(1)}{\varphi(1)}-
\int_1^s \Theta(t) d\left(\frac{1}{t\varphi(t)}\right)\\
&=&\frac{\Theta(s)}{s\varphi(s)}-\frac{\Theta(1)}{\varphi(1)}+
\int_1^s \frac{\Theta(t)}{t^2\varphi(t)} dt-
\int_1^s \frac{\Theta(t)}{t} d\left(\frac{1}{\varphi(t)}\right)\\
&\ge& -\frac{\Theta(1)}{\varphi(1)}+
\int_1^s \frac{\Theta(t)}{t^2\varphi(t)} dt.
\end{eqnarray*}

Hence by (\ref{zero12}) it follows that
\begin{equation}\label{ZZZ0}
\int_1^\infty \frac{\Theta(t)\,dt}{t^2\varphi(t)}<\infty.
\end{equation}
Therefore by  (\ref{phiAp})
and (\ref{ZZZ0}) we infer
that there exists  $t_k\to\infty$, $k\to\infty,$ such that
$
\lim_{k\to\infty}\,\Theta(t_k)/t_k=0.
$
Therefore (\ref{zero12T}) holds
and by  Proposition
\ref{cesarozero1T} we have $x=0$.
\end{proof}

\begin{remark}\label{ApB}
Note that if $g$ as in Section \ref{section3} and $g \sim (0,b,\mu), b >0,$ then $g'(1/t)t^{-2}$ and $g(1/t)/t$ are separated from zero
on $(0,\infty)$ so that the conditions \eqref{g1} and \eqref{first} reduce to \eqref{zero12}
with $\varphi(t)\equiv 1$ yielding $x=0.$
\end{remark}
\begin{center}
{\bf Acknowledgements}
\end{center}
\vspace{0,2cm}

The authors are grateful to  M. Haase and M. Lin for useful remarks and fruitful discussions. They would also like to thank
M. Lin for sending them the unpublished manuscript \cite{CoCuLi11}.
\vspace{0,2cm}


\begin{thebibliography}{100}

\bibitem{AsLi07}
 I. Assani and M. Lin,
\emph{On the one-sided ergodic Hilbert transform}, Ergodic theory
and related fields, Contemporary  Math., \textbf{430}, Providence,
RI, 2007, 21--39.






\bibitem{Bro58} F. E.~Browder,
\emph{On the iteration of transformations in noncompact minimal
dynamical systems}, Proc. Amer. Math. Soc. {\bf 9} (1958),
773--780.


\bibitem{BuGe95} P.~Butzer and A. Gessinger, \emph{
Ergodic theorems for semigroups and cosine operator functions at
zero and infinity with rates; applications to partial differential
equations. A survey,} Mathematical analysis, wavelets, and signal
processing, Contemporary Math. \textbf{190} (1995),  67--94.

\bibitem{BuWe71}
P.~Butzer and U. Westphal, \emph{The mean  ergodic theorem and
saturation}, Indiana Univ. Math. J. \textbf{20} (1971),
1163--1174.



\bibitem{CoCuLi11} G. Cohen, C. Cuny and M. Lin, \emph{On convergence of power series of $L_p$
contractions,} Banach Center Publications, to appear.

\bibitem{CoLi03}
G. Cohen and M. Lin, \emph{Laws of large numbers with rates and
the one-sided ergodic Hilbert transform}, Illinois J. Math.
\textbf{47} (2003), 997--1031.

\bibitem{CoLi05}
G. Cohen and M. Lin, \emph{Extensions of the Menchoff-Rademacher
theorem with applications to ergodic theory}, Israel J. Math.
\textbf{148} (2005), 41--86.

\bibitem{CoLi09}
G. Cohen and M. Lin, \emph{The one-sided ergodic Hilbert transform
of normal contractions,} in: Characteristic functions, scattering
functions and transfer functions - the Moshe Liv\v sic memorial
volume, Birkh\"auser, Basel, 2009, 77--98.

\bibitem{Cu09}
C. Cuny, \emph{Pointwise ergodic theorems with rate and
application to limit theorems for stationary processes},
Stochastics and Dynamics, \textbf{11} (2011), 135-155.

\bibitem{Cu101} C. Cuny,
\emph{On the a.s. convergence of the one-sided ergodic Hilbert
transform}, Ergodic Th. Dynamical Sys. {\bf 29}  (2009),
1781--1788.

\bibitem{Cu10}
C. Cuny, \emph{Norm convergence of some power series of operators
in $L^p$ with applications in ergodic theory},
 Studia Math. \textbf{200} (2010), 1--29.

\bibitem{CuLi09} C. Cuny and M. Lin,
\emph{Pointwise ergodic theorems with rate and application to the
CLT for Markov chains}, Ann. Inst. H. Poincare, ser. Prob. and
Stat. {\bf 45}  (2009), 710--733.

\bibitem{deLau95}
R. deLaubenfels, \emph{Automatic extensions of functional
calculi}, Studia Math. \textbf{114} (1995), 237--259.

\bibitem{Der06}
Y. Derriennic, \emph{Some aspects of recent works on limit
theorems in ergodic theory with special emphasis on the ``central
limit theorem''}, Discrete Contin. Dyn. Syst. (Ser. A) {\bf 15}
(2006), 143--158.

\bibitem{DerLin01}
Y. Derriennic and M. Lin, \emph{Fractional Poisson equations and
ergodic theorems for fractional coboundaries}, Israel J. Math.
\textbf{123} (2001), 93--130.







\bibitem{GHT1}
A. Gomilko, M. Haase, and Yu. Tomilov, \emph{On rates in mean
ergodic theorems}, Math. Res. Letters \textbf{18} (2011),
201--213.

\bibitem{GHT} A. Gomilko, M. Haase, and Yu. Tomilov, \emph{Bernstein functions and  rates in mean ergodic theorems
for operator semigroups,} J. d'Analyse Mathematique \textbf{118} (2012), 545--576.


\bibitem{Haa2006} M. Haase, The Functional Calculus for Sectorial Operators.
Operator Theory: Advances and Applications \textbf{169},
Birkh\"auser,  Basel, 2006.


\bibitem{HaTo10} M. Haase and Yu. Tomilov,
\emph{Domain characterizations of certain functions of
power-bounded
 operators}, Studia Math. {\bf 196} (2010), 265--288.

\bibitem{HilPhi}
E. Hille and R. S. Phillips, \emph{Functional Analysis and
Semi-Groups,} 3rd printing of rev. ed. of 1957, Colloq. Publ.
\textbf{31}, AMS, Providence, RI, 1974.


\bibitem{HirschInt}
F. Hirsch, \emph{Int\'egrales de r\'esolvantes et calcul
symbolique}, Ann. Inst. Fourier \textbf{22} (1972),
239–-264.



\bibitem{HirschFA}
F. Hirsch, \emph{Domaines d'op\'erateurs repr\'esent\'es comme
int\'egrales de r\'esolvantes}, J. Functional Analysis \textbf{23}
(1976), 199–-217.

\bibitem{Jacob} N. Jacob, \emph{Pseudo differential operators and Markov processes. Vol. I.
Fourier analysis and semigroups,} Imperial College Press, London, 2001.

\bibitem{Ka96}
A. G. Kachurovskii, \emph{Rates of convergence in ergodic
theorems,} Uspekhi Mat. Nauk \textbf{51} (1996), 73--124 (in
Russian); translation in Russian Math. Surveys \textbf{51} (1996),
653–-703.

\bibitem{KaRe10} A. G. Kachurovskii and A. V. Reshetenko, \emph{On the rate of
convergence in von Neumann's ergodic theorem with continuous
time,}  Mat. Sb. \textbf{201} (2010),  25--32 (in Russian);
transl. in Sb. Math. \textbf{201} (2010),  493-–500.



\bibitem{Kr85} U. Krengel, \emph{Ergodic Theorems,}  de Gruyter Studies in Math.
\textbf{6}, Walter de Gruyter, Berlin, 1985.











\bibitem{Phil52}
R. S. Phillips, \emph{On the generation of semigroups of linear
operators,} Pacific J. Math. \textbf{2} (1952), 343–-369.


\bibitem{Shaw} S.-Y. Shaw, Convergence rates of ergodic limits and associated solutions, \emph{J. Approximation Theory}
\textbf{75} (1993), 157--166.


\bibitem{Schil98}
R. L. Schilling, \emph{Subordination in the sense of {B}ochner and
a related  functional calculus}, J. Austral. Math. Soc. \textbf{64} (1998), 368--396.


\bibitem{SchilSonVon2010}
 R. Schilling, R. Song, and Z. Vondra{\v{c}}ek,  \emph{Bernstein functions},
 de Gruyter Studies in Math. \textbf{37}, Walter de Gruyter, Berlin, 2010.


\bibitem{Seneta}
E. Seneta, \emph{Regularly varying functions}, Lecture Notes in
Math. \textbf{508}, Springer, 1976.


 \bibitem{West1998}
 U. Westphal, \emph{A generalized version of the Abelian mean ergodic theorem with rates for semigroup operators and fractional powers of infinitesimal generators,}
Results in Math. \textbf{34} (1998), 381--394.

 \end{thebibliography}
\end{document}